\newfont{\footsc}{cmcsc10 at 8truept}
\newfont{\footbf}{cmbx10 at 8truept}
\newfont{\footrm}{cmr10 at 10truept}
\newtheorem{theorem}{Theorem}
\newtheorem{claim}[theorem]{Claim}
\newtheorem{corollary}[theorem]{Corollary}
\newtheorem{lemma}[theorem]{Lemma}
\newtheorem{proposition}[theorem]{Proposition}
\newtheorem{remark}[theorem]{Remark}
\newenvironment{proof}[1][Proof.]{\noindent{\emph {#1}  }}  {\hfill$\Box$\bigskip}
\def\blfootnote{\xdef\@thefnmark{}\@footnotetext}
\begin{document}

\title{Bounding the sum of the largest\\ signless Laplacian eigenvalues of a graph}

\author{Aida Abiad
	\thanks{Department of Mathematics and Computer Science, Eindhoven University of Technology, The Netherlands}
	\thanks{Department of Mathematics: Analysis, Logic and Discrete Mathematics, Ghent University, Belgium} 
	\thanks{Department of Mathematics and Data Science, Vrije Universiteit Brussel, Belgium (\texttt{a.abiad.monge@tue.nl})} 
	\and 
	Leonardo de Lima\thanks{Graduate Program in Mathematics, Federal University of Parana, Curitiba, Brazil (\texttt{leonardo.delima@ufpr.br})}
	\and 
	Sina Kalantarzadeh\thanks{Sharif University of Technology, Iran (\texttt{sinakalantarzadehhh@yahoo.com})}
	\and Mona Mohammadi\thanks{Sharif University of Technology, Iran (\texttt{mona.mohammadi78@gmail.com})} \and
	Carla Oliveira\thanks{Department of
		Mathematical, National School of Statistical Sciences, Rio de Janeiro, Brazil (\texttt{carla.oliveira@ibge.gov.br})}
}

\date{}
\maketitle

\begin{abstract}
We show several sharp upper and lower bounds for the sum of the largest eigenvalues of the signless Laplacian matrix. These bounds improve and extend previously known bounds.

\vspace{0.3cm}


\medskip
\noindent \textbf{AMS classification: }05C50, 05C35

\end{abstract}

\section{Introduction}

Consider $G=(V,E)$ to be a simple graph with $n$ vertices such that $|V|=n$. Let $N(v_i)$ be the set of neighbors of a vertex $v_i \in V$ and $\left| N(v_i) \right|$ its cardinality. The sequence degree of $G$ is denoted by $d(G) = \left(d_1(G),d_2(G),\ldots,d_n(G) \right),$ such that $d_i(G) = |N(v_i)|$ is the degree of the vertex $v_i \in V$ and $ d_1(G) \geq d_2(G) \geq \cdots \geq d_n(G).$ The \emph{Laplacian matrix} of $G$ is defined as $L=D-A$, where $D$ is the diagonal matrix of the vertex degrees and $A$ is the adjacency matrix of $G.$ The \emph{signless Laplacian matrix} (or  \emph{$Q$-matrix}), defined as $Q = A + D$, has received a lot of attention, see, e.g., \cite{CSI, CSII, CSIII}. The eigenvalues of $L$ and $Q$ are denoted as
$\lambda_{1}(G) \geq \lambda_{2}(G) \geq \cdots \geq \lambda_{n}(G)=0$ and $q_{1}(G) \geq q_{2}(G) \geq \cdots \geq q_{n}(G)$, respectively. For simplicity, the eigenvalues of $Q$ and $L$ are called here as $Q-$eigenvalues and $L-$eigenvalues of $G,$ respectively. 

Using Schur's inequality ~\cite{S1923}, it is known that
\begin{equation}
\label{eq:1HaemersSeminar}
\sum_{i=1}^{m}\lambda_{i}(G)\geq
\sum_{i=1}^{m}d_{i}(G)
\end{equation}
and
\begin{equation}
\label{eq:1HaemersSeminar2}
\sum_{i=1}^{m}q_{i}(G)\geq
\sum_{i=1}^{m}d_{i}(G)
\end{equation}
for $1 \leq m \leq n.$ Note that if $m=n,$ we have equality in \eqref{eq:1HaemersSeminar} and \eqref{eq:1HaemersSeminar2}, because both
terms correspond to the trace of $L$ and $Q,$ respectively. 
An improvement of~\eqref{eq:1HaemersSeminar} is due to Grone~\cite{G1995}, who proved that if $G$ is connected and $k<n$ then,
\begin{equation}
\label{eq:Gronebound}
\sum_{i=1}^{m}\lambda_{i}(G)\geq
\sum_{i=1}^{m}d_{i}(G)+1.
\end{equation}
The first author, Fiol, Haemers and Perarnau \cite{AFHP2014} showed a generalization and a variation of (\ref{eq:Gronebound}), as well as an extension of some inequalities by
Grone and Merris \cite{GM1994}.

There have been some results bounding the sum of the two largest signless Laplacian eigenvalues, see for example \cite{AOT2013},  \cite{D2019} and \cite{YY14}. Cvetkovi\'c, Rowlinson and Simi\'c \cite{CRS07} proved that $q_1(G) \geq d_1(G)+1$. After, Das \cite{Das10} showed that  $q_2(G) \geq d_2(G)-1.$ An immediate lower bound is $q_1(G)+q_2(G) \geq d_1(G)+d_2(G)$ (note that Schur's inequality can also be used to obtain the same result).

In this paper, we use a mix of two types of interlacing (Cauchy and quotient matrix) to obtain several sharp lower and upper bounds on the sum of the largest signless Laplacian eigenvalues. In particular, we show a lower bound for $q_1(G) + q_2(G)$ and characterize the case of equality. This bound improves previously known bounds. We also show several sharp bounds for the sum of the largest $Q$-eigenvalues, providing a $Q$-analog of Grone's inequality  (\ref{eq:Gronebound}). The paper is organized such that preliminary results are presented in Section \ref{sec"preliminaries}, and Sections \ref{sec:twoevs} and \ref{sec:Section2and3extendedQ} are devoted to the main results. 

\section{Preliminaries}\label{sec"preliminaries}



Some of our proofs use a classical result in matrix theory, the Cauchy interlacing theorem (see for instance \cite[Theorem 4.3.8]{HoJo85}, \cite{H1995}).

\begin{theorem}[Interlacing Theorem] (\cite{H1995}) \label{the:interlacinghaemers}
  Let $A$ be a real symmetric $n \times n$ matrix with eigenvalues $\lambda_1 \geq \dots \geq \lambda_n$.
For some $m<n$, let $S$ be a real $n \times m$ matrix with orthonormal columns, $S^\top S = I$,
and consider the matrix $B = S^\top AS$, with eigenvalues $\mu_1 \geq \dots \geq \mu_m$. 
\begin{description}
\item[$(i)$] The eigenvalues of $B$ interlace those of $A$, that is,
\begin{equation}\label{interlacinginequalities}
    \lambda_i \geq \mu_i \geq \lambda_{n-m+i}, \qquad i = 1, \ldots, m,
\end{equation}
\item[$(ii)$] If the interlacing is tight, that is, if exist an integer $k \in [0,m]$ such that $\lambda_i = \mu_i$, for $i = 1,\ldots,k$, and
$\mu_i = \lambda_{n-m+i}$, for $i = k + 1,\ldots,m$, then $SB = AS$.
\end{description}
\end{theorem}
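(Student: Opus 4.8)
The plan is to derive both parts from the Courant--Fischer variational characterization of eigenvalues, exploiting that $S$ is a partial isometry. Since $S^\top S = I$, for every $x \in \mathbb{R}^m$ one has $\|Sx\| = \|x\|$ and
\[
(Sx)^\top A (Sx) = x^\top S^\top A S x = x^\top B x ,
\]
so the Rayleigh quotient of $A$ at $Sx$ equals that of $B$ at $x$, and $S$ is injective because its columns are orthonormal. This single identity is the engine behind everything that follows.

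For part $(i)$ I would fix $i$ and let $v_1, \ldots, v_n$ and $u_1, \ldots, u_m$ be orthonormal eigenvectors of $A$ and $B$. To obtain $\mu_i \le \lambda_i$, set $U = \langle u_1, \ldots, u_i\rangle$ and $V = \langle v_i, \ldots, v_n\rangle$; then $SU$ and $V$ have dimensions $i$ and $n-i+1$, whose sum exceeds $n$, so they meet in a nonzero vector $z = Sx$ with $x \in U$. On $V$ the Rayleigh quotient of $A$ is at most $\lambda_i$, while on $U$ the Rayleigh quotient of $B$ is at least $\mu_i$, and the displayed identity squeezes $\mu_i \le \lambda_i$. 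The companion bound $\mu_i \ge \lambda_{n-m+i}$ follows symmetrically by taking $U = \langle u_i, \ldots, u_m\rangle$ and $V = \langle v_1, \ldots, v_{n-m+i}\rangle$ (equivalently, by applying the first bound to $-A$ and $-B$).

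For part $(ii)$ I would put $w_i = S u_i$. These vectors are orthonormal and satisfy $w_i^\top A w_j = u_i^\top B u_j = \mu_j\,\delta_{ij}$, so $A w_i = \mu_i w_i + r_i$ with $r_i$ orthogonal to $\langle w_1, \ldots, w_m\rangle = \mathrm{col}(S)$; the goal is to show every $r_i = 0$. For the indices $i \le k$ with $\mu_i = \lambda_i$ I would argue by induction that each $w_i$ is a genuine $\lambda_i$-eigenvector of $A$: once $w_1, \ldots, w_{i-1}$ are eigenvectors, they form an orthonormal basis of the sum of all eigenspaces of $A$ whose eigenvalue exceeds $\lambda_i$, so $w_i$, being orthogonal to them, lies in the span of eigenvectors with eigenvalue $\le \lambda_i$; then $w_i^\top A w_i = \mu_i = \lambda_i$ forces $w_i$ into the $\lambda_i$-eigenspace. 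The indices $i > k$ with $\mu_i = \lambda_{n-m+i}$ are handled by the mirror-image argument from the bottom of the spectrum. Hence $A w_i = \mu_i w_i$, that is $A S u_i = S(\mu_i u_i) = S B u_i$ for every $i$; since the $u_i$ form a basis of $\mathbb{R}^m$, this yields $AS = SB$.

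The step I expect to be delicate is the equality analysis in $(ii)$: the naive hope that a unit vector $w_i$ with $w_i^\top A w_i = \mu_i$ must be an eigenvector fails for intermediate eigenvalues, and repeated eigenvalues of $A$ prevent one from simply identifying $w_i$ with $v_i$. The induction above is exactly what repairs this, the crucial point being that the already-constructed eigenvectors exhaust the eigenspaces lying strictly above $\lambda_i$, so that the Rayleigh-quotient inequality sharpens into an eigenvector characterization.
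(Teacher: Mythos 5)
Your argument is correct and follows essentially the same route as the source the paper relies on: the paper quotes this theorem from \cite{H1995} without reproving it, and your Courant--Fischer dimension count for part $(i)$ together with the eigenvector induction for part $(ii)$ is the standard argument from that reference. One cosmetic slip: when $\lambda_{i-1}=\lambda_i$ the vectors $w_1,\dots,w_{i-1}$ do not form a basis of the sum of eigenspaces with eigenvalue strictly exceeding $\lambda_i$ (only those $w_j$ with $\lambda_j>\lambda_i$ do, and a dimension count shows they span it), but the conclusion you draw from orthogonality is unaffected.
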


Two interesting types of eigenvalue interlacing appear depending on the choice of $B$: when $B$ is a principal submatrix of $A$ (the so-called Cauchy interlacing), and when $B$ is the quotient matrix of a certain partition of $A$. Our proofs in Section \ref{sec:Section2and3extendedQ}  will require a novel mix of the two types of eigenvalue interlacing.

The Cauchy interlacing theorem for the signless Laplacian matrix holds in a specific way. In  \cite[Theorem 2.6]{WB11}, for a vertex $v \in V$, the authors proved that the $Q-$eigenvalues of $G$ and $G-v$ interlace, where $G-v$ is a graph obtain from $G$ removing the vertex $v$: 

%

\begin{theorem}[\cite{WB11}]\label{interlacing_vertex_version}
Let $G$ be a graph of order $n$ and $v \in V.$ Then for $i=1,\ldots,n-1,$
$$ q_{i+1}(G) -1 \leq q_{i}(G-v) \leq q_{i}(G), $$
where the right inequality holds if and only if $v$ is an isolated vertex.
\end{theorem}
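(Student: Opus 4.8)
The plan is to realize the signless Laplacian of $G-v$ as a rank-controlled additive perturbation of a principal submatrix of $Q(G)$, and then to combine the Cauchy form of Theorem~\ref{the:interlacinghaemers} with Weyl's eigenvalue monotonicity. Order the vertices of $G$ so that $v$ is last and write
\[
Q(G)=\begin{pmatrix} Q_{11} & b\\ b^{\top} & d_v\end{pmatrix},
\]
where $Q_{11}$ is the principal submatrix of $Q(G)$ indexed by $V\setminus\{v\}$ and $b$ is the $(n-1)$-vector with entries $b_u=a_{uv}$. The crucial observation is that $Q_{11}$ is \emph{not} $Q(G-v)$: deleting $v$ lowers the degree of each of its neighbors by one, so $d_{G-v}(u)=d_G(u)-a_{uv}$ on the diagonal while the off-diagonal entries are unchanged, whence
\[
Q(G-v)=Q_{11}-D_v,\qquad D_v:=\operatorname{diag}\big(a_{uv}\big)_{u\neq v}.
\]
Since $D_v$ is diagonal with entries in $\{0,1\}$, it satisfies $0\preceq D_v\preceq I$.

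First I would apply Theorem~\ref{the:interlacinghaemers} with $S=[e_1,\dots,e_{n-1}]$, so that $S^{\top}S=I_{n-1}$ and $S^{\top}Q(G)S=Q_{11}$; here $m=n-1$, so $\lambda_{n-m+i}=\lambda_{i+1}$ and the interlacing inequalities read
\[
q_{i+1}(G)\le q_i(Q_{11})\le q_i(G),\qquad i=1,\dots,n-1.
\]
Next, from $0\preceq D_v\preceq I$ we get $Q_{11}-I\preceq Q(G-v)\preceq Q_{11}$, and Weyl's monotonicity ($A\preceq B\Rightarrow\lambda_i(A)\le\lambda_i(B)$, together with $\lambda_i(Q_{11}-I)=\lambda_i(Q_{11})-1$) yields
\[
q_i(Q_{11})-1\le q_i\big(Q(G-v)\big)\le q_i(Q_{11}),\qquad i=1,\dots,n-1.
\]
Chaining the upper bounds gives $q_i(G-v)\le q_i(Q_{11})\le q_i(G)$, the right inequality; chaining the lower bounds gives $q_i(G-v)\ge q_i(Q_{11})-1\ge q_{i+1}(G)-1$, the left inequality.

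It remains to characterize equality in the right inequality. If $v$ is isolated then $b=0$ and $D_v=0$, so $Q(G)=Q(G-v)\oplus(0)$ and the eigenvalues of $Q(G-v)$ are precisely the $n-1$ largest eigenvalues of $Q(G)$; hence $q_i(G-v)=q_i(G)$ for all $i$. For the converse I would argue by traces: if $q_i(G-v)=q_i(G)$ for every $i\in\{1,\dots,n-1\}$, then summing and using $\operatorname{tr}Q(G-v)=2|E(G)|-2d_v$ together with $\sum_{i=1}^{n-1}q_i(G)=\operatorname{tr}Q(G)-q_n(G)=2|E(G)|-q_n(G)$ forces $q_n(G)=2d_v$. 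Since the least eigenvalue satisfies $q_n(G)\le e_v^{\top}Q(G)e_v=d_v$ by the Rayleigh quotient, we obtain $2d_v\le d_v$, i.e.\ $d_v=0$, so $v$ is isolated.

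The two inequalities require little beyond this bookkeeping; the genuinely delicate point is the equality statement, and I expect it to be the main obstacle. A single index does not suffice — for $G=2K_2$ and $v$ an endpoint one has $q_1(G-v)=q_1(G)=2$ while $v$ is not isolated — so ``the right inequality holds [with equality]'' must be read as equality for all $i$, which is exactly the hypothesis the trace identity exploits. I would therefore present the trace argument as the crux and verify along the way that no auxiliary assumption (such as connectedness) is tacitly needed.
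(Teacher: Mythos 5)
Your argument is correct, and I have checked each step: the identity $Q(G-v)=Q_{11}-D_v$ with $0\preceq D_v\preceq I$, the Cauchy interlacing $q_{i+1}(G)\le \lambda_i(Q_{11})\le q_i(G)$ obtained from Theorem~\ref{the:interlacinghaemers} with $m=n-1$, the Weyl step $\lambda_i(Q_{11})-1\le q_i(G-v)\le\lambda_i(Q_{11})$, and the trace-plus-Rayleigh argument forcing $q_n(G)=2d_v\le d_v$ in the equality case all hold. Note, however, that the paper does not prove this statement at all: it is imported verbatim from Wang and Belardo \cite{WB11}, so there is no in-paper proof to compare against. Relative to the published proof (which decomposes $Q(G)$ as the direct sum $Q(G-v)\oplus(0)$ plus the signless Laplacian of the star of edges at $v$, and then invokes Weyl's inequalities using that the second largest $Q$-eigenvalue of a star is $1$), your route perturbs the principal submatrix $Q_{11}$ rather than the full matrix; both are short, and yours has the merit of isolating exactly where the ``$-1$'' loss comes from (the degree drop at the neighbors of $v$). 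Your observation that the equality clause must be read as equality for \emph{all} $i$ --- witnessed by $G=2K_2$, where $q_1(G-v)=q_1(G)=2$ for a non-isolated $v$ --- is a genuine clarification of the statement as printed, since the theorem assumes no connectivity; your if-and-only-if proof is complete under that reading.
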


Cvetkovi\'c \emph{et al.} in \cite{CRS07b} presented an edge removal version of the Cauchy interlacing theorem for the $Q$-eigenvalues by using line graphs:

\begin{theorem} [\cite{CRS07b}]
\label{interlacing_edge_version}
Let $G$ be a graph on $n$ vertices and let $e$ be an edge of $G$. Let $q_1 \geq q_2 \geq \cdots \geq q_n$ and $s_1 \geq s_2 \geq \cdots \geq s_n$ be the $Q$- eigenvalues of $G$ and $G-e$, respectively. Then
$$0\leq s_n \leq q_n \leq \cdots \leq s_2 \leq q_2 \leq s_1 \leq q_1.$$
\end{theorem}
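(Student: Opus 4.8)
The plan is to realize $Q(G)$ as a rank-one positive semidefinite perturbation of $Q(G-e)$ and then invoke the interlacing such perturbations produce. Write $e=\{u,v\}$ and let $w\in\mathbb{R}^n$ be the vector equal to $1$ in coordinates $u$ and $v$ and $0$ elsewhere. Comparing entries, deleting $e$ lowers $d_u$ and $d_v$ each by $1$ and lowers the $(u,v)$ and $(v,u)$ entries of $A$ from $1$ to $0$; these four changes are exactly the nonzero entries of $ww^\top$. Hence
$$Q(G)=Q(G-e)+ww^\top,$$
a rank-one update by the positive semidefinite matrix $ww^\top$, whose eigenvalues are $\|w\|^2=2$ (with multiplicity one) and $0$ (with multiplicity $n-1$).

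First I would record the two immediate consequences of this identity. Since $ww^\top\succeq 0$ we have $Q(G)\succeq Q(G-e)$, so by the Courant--Fischer min-max characterization $q_i\ge s_i$ for every $i$. For the reverse interlacing I would apply Weyl's inequality in the form $q_{i+1}\le s_i+\nu_2$, where $\nu_2=0$ is the second-largest eigenvalue of the rank-one matrix $ww^\top$; this yields $q_{i+1}\le s_i$. Chaining the two families of inequalities gives
$$q_1\ge s_1\ge q_2\ge s_2\ge\cdots\ge q_n\ge s_n,$$
which is the asserted interlacing. Finally $s_n\ge 0$, since $Q(G-e)=R'R'^{\top}$ is a Gram matrix for the vertex--edge incidence matrix $R'$ of $G-e$ and is therefore positive semidefinite.

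The only step needing justification is the rank-one interlacing itself, which I would either cite as the classical eigenvalue-perturbation result or derive from Weyl's inequalities exactly as above; the perturbation acts directly on the $n\times n$ matrices $Q(G)$ and $Q(G-e)$, so no separate treatment of zero eigenvalues is required. An alternative, matching the line-graph viewpoint of \cite{CRS07b}, would use $R^{\top}R=2I+A'$, where $A'$ is the adjacency matrix of the line graph of $G$, together with the observation that deleting the column of $R$ indexed by $e$ turns $R^{\top}R$ into a principal submatrix; then Theorem~\ref{the:interlacinghaemers} applies to these line-graph matrices. On that route the main obstacle is the bookkeeping of zero eigenvalues when passing between the shared nonzero spectra of $RR^{\top}$ and $R^{\top}R$, precisely the complication that the rank-one formulation avoids.
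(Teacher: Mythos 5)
Your argument is correct, and it is complete as written. The identity $Q(G)-Q(G-e)=ww^\top$ with $w=e_u+e_v$ is verified correctly (the degree decrements give the two diagonal entries and the adjacency decrements give the two off-diagonal entries of $ww^\top$), Weyl's monotonicity gives $q_i\ge s_i$, the Weyl inequality $\lambda_{i+j-1}(A+B)\le\lambda_i(A)+\lambda_j(B)$ with $j=2$ and $\lambda_2(ww^\top)=0$ gives $q_{i+1}\le s_i$, and positive semidefiniteness of $Q(G-e)$ gives $s_n\ge 0$. Chaining these yields exactly the asserted string of inequalities.

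Note, however, that the paper does not prove this theorem at all: it is imported verbatim from \cite{CRS07b}, and the surrounding text records only that the original derivation goes through line graphs. That route is the one you sketch as an alternative: one uses $R^{\top}R=2I+A(L(G))$, observes that deleting the column of $R$ indexed by $e$ produces a principal submatrix, applies Cauchy interlacing there (Theorem~\ref{the:interlacinghaemers} with $B$ a principal submatrix), and then transfers the conclusion back to $Q(G)=RR^{\top}$ via the shared nonzero spectrum. The cost of that route is precisely the bookkeeping you identify: the incidence matrix $R$ is $n\times|E|$, its rank depends on the number of bipartite components, and one must pad with zero eigenvalues on one side or the other before the interlacing chain for the $Q$-eigenvalues can be read off. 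Your rank-one perturbation argument sidesteps all of that by working directly with the two $n\times n$ matrices, at the price of invoking Weyl's inequalities rather than the Cauchy interlacing theorem the paper already states. Both are standard tools; your version is arguably the cleaner self-contained proof, while the line-graph version is the one consistent with the machinery the paper otherwise emphasizes.
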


It turns out that the Cauchy interlacing theorem also holds for the Laplacian matrix of $G$ as showed by Godsil and Royle \cite[Theorem 13.6.2]{GR01}:

\begin{theorem}[\cite{GR01}] \label{th_interlace_laplacian}
Let $G$ be a graph on $n$ vertices and let $e \in E$ be an edge of $G$. The $L-$eigenvalues of $G$ and $H = G - e$ interlace, that is,
$$ \lambda_{1}(G) \geq \lambda_1(H) \geq \cdots \geq  \lambda_{n-1}(G) \geq  \lambda_n(H) = \lambda_{n}(G) = 0.$$
\end{theorem}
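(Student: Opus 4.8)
The plan is to reduce edge deletion to a principal-submatrix operation, so that the Cauchy case of Theorem~\ref{the:interlacinghaemers} applies directly; the oriented incidence matrix is exactly the device that makes this possible, since removing an edge is \emph{not} a submatrix operation on $L$ itself (that would be vertex removal) but only a rank-one change. Fix an orientation of $G$ and let $B$ be its $n \times m$ oriented (vertex-by-edge) incidence matrix, where $m = |E|$, so that $L(G) = B B^\top$. Deleting the edge $e$ deletes the corresponding column of $B$; write $B' = B S$, where $S$ is the $m \times (m-1)$ matrix whose columns are the standard basis vectors $\mathbf{e}_f$ for $f \neq e$. Then $L(H) = B'(B')^\top$. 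The first key step is the standard fact that $B^\top B$ and $B B^\top$ share the same nonzero eigenvalues with the same multiplicities: if $B^\top B v = \theta v$ with $\theta \neq 0$ then $Bv \neq 0$ and $B B^\top (Bv) = \theta (Bv)$, and conversely. Hence the nonzero $L$-eigenvalues of $G$ are exactly the nonzero eigenvalues of the $m \times m$ matrix $B^\top B$, and likewise for $H$ and $(B')^\top B'$.

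The second step is to observe that $(B')^\top B' = S^\top (B^\top B) S$, which is precisely the $(m-1)\times(m-1)$ principal submatrix of $B^\top B$ obtained by deleting the row and column indexed by $e$. Since $S^\top S = I$, part $(i)$ of Theorem~\ref{the:interlacinghaemers} applies with $A = B^\top B$ and gives $\theta_i \ge \theta_i' \ge \theta_{i+1}$, where $\theta_1 \ge \cdots \ge \theta_m$ are the eigenvalues of $B^\top B$ and $\theta_1' \ge \cdots \ge \theta_{m-1}'$ those of $(B')^\top B'$ (here the ``$n$'' and ``$m$'' of the theorem are $m$ and $m-1$, so the lower index is $\lambda_{m-(m-1)+i} = \lambda_{i+1}$).

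It remains to transfer this interlacing from the $m$-dimensional edge-space spectra back to the $n$-dimensional Laplacian spectra, and this bookkeeping is the only delicate point. By the first step the two lists differ from the full $L$-spectra of $G$ and $H$ only in their number of trailing zeros, since the multiplicity of $0$ as an $L$-eigenvalue equals the number of connected components, which increases by at most one under edge deletion. Deleting from, or padding with, a common block of zero eigenvalues at the bottom of both ordered lists preserves the alternating inequalities, so $\theta_i \ge \theta_i' \ge \theta_{i+1}$ is equivalent to $\lambda_i(G) \ge \lambda_i(H) \ge \lambda_{i+1}(G)$ for all $i$. Finally, the all-ones vector lies in the kernel of every Laplacian, so $\lambda_n(G) = \lambda_n(H) = 0$, which pins down the bottom of the chain as stated.

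I expect the main obstacle to be this last paragraph rather than the interlacing itself: one must argue carefully that the nonzero eigenvalues of $L(G)$ and $L(H)$ land in the interlaced positions once the extra zeros are accounted for, and check the degenerate cases (for instance $e$ a bridge or a pendant edge, where the component count jumps, or $m$ very small). A clean alternative that sidesteps the dimension bookkeeping is to argue directly from the Courant--Fischer min-max formulas using $L(G) = L(H) + (\mathbf{e}_u - \mathbf{e}_v)(\mathbf{e}_u - \mathbf{e}_v)^\top$ for $e = uv$: monotonicity of this rank-one positive semidefinite perturbation gives $\lambda_i(G) \ge \lambda_i(H)$, while restricting the competing subspaces to the hyperplane $(\mathbf{e}_u - \mathbf{e}_v)^\perp$, on which the perturbation vanishes, gives $\lambda_i(H) \ge \lambda_{i+1}(G)$. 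I would keep this min-max argument as a fallback if the incidence-matrix bookkeeping becomes cumbersome.
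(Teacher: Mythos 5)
The paper does not prove this statement; it is quoted directly from Godsil and Royle \cite{GR01} (Theorem 13.6.2 there). Your argument is correct and is essentially the proof given in that reference: factor $L(G)=BB^\top$ via the oriented incidence matrix, note that edge deletion is column deletion on $B$ and hence principal-submatrix deletion on $B^\top B$, apply Cauchy interlacing (part $(i)$ of Theorem~\ref{the:interlacinghaemers}), and transfer back through the shared nonzero spectrum of $BB^\top$ and $B^\top B$; your zero-padding bookkeeping goes through because all eigenvalues involved are nonnegative, and the rank-one Weyl/Courant--Fischer fallback you mention is an equally valid standard alternative.
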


Let $A$ is a symmetric real matrix whose rows and columns are indexed by $X=\{1,2, \ldots, n\}.$ Let $\{X_1, X_2, \ldots, X_n\}$ be a partition of $X$. The characteristic matrix $S$ is the $n \times m$ whose j${-th}$ column is the characteristic vector of $X_j$ $(j=1, \ldots, m).$ Define $n_i = |X_i|$ and the diagonal matrix $K=diag(n_1, \ldots, n_m).$ Let $A$ be partitioned according to $\{X_1, X_2, \ldots, X_n\}$, that is 
$$A = \left[
\begin{array}{ccc}
A_{11} &  \cdots  & A_{1m} \\
\vdots  & \ddots  &  \vdots  \\
A_{m1} &  \cdots & A_{mm} \\
\end{array}
 \right] $$
where $A_{ij}$ denotes the submatrix (block) of $A$ formed by rows in $X_i$ and the column in $X_j.$ Let $b_{ij}$ the average row sum of $A_{ij}.$ Then the matrix $B=(b_{ij})$ is called the \emph{quotient matrix}. We easily have $KB=S^TAS$ and $S^TS=K.$ If the row sum of each block $A_{ij}$ is constant the partition is called \emph{equitable}. If each vertex in $X_i$ has the same number $b_{ij}$ of neighbors in part $X_j,$ for any $j$ (or any $j \ne i)$, the partition is called \emph{almost equitable}.

\begin{lemma}({\cite{BH12}}) \label{lemma}
Let $A$ be a symmetric matrix of order $n$, and suppose $P$ is a partition of
$\{1,\ldots,n\}$ such that the corresponding partition of $A$ is equitable with quotient matrix $B$.
Then the spectrum of $B$ is a sub(multi)set of the spectrum of $A$, and all corresponding
eigenvectors of $A$ are in the column space of the characteristic matrix $C$ of $P$ (this means that the entries of the eigenvector are constant on each partition class $U_i$). The remaining eigenvectors of $A$ are orthogonal to the columns of $C$ and the corresponding
eigenvalues remain unchanged if the blocks $A_{i,j}$ are replaced by $A_{i,j} + c_{i,j}J$ for certain
constants $c_{i,j}$ (as usual, $J$ is the all-one matrix).
\end{lemma}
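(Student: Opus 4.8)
The plan is to build everything on the single matrix identity $AC = CB$, which is the algebraic content of equitability. To derive it, I would compute the $(v,j)$ entry of $AC$ for a row index $v$ lying in class $U_i$: it equals $\sum_{u \in U_j} A_{vu}$, i.e.\ the row sum of the block $A_{ij}$ along row $v$. Equitability says this row sum is the constant $b_{ij}$ independent of the choice of $v \in U_i$, so $(AC)_{vj} = b_{ij}$. On the other hand $(CB)_{vj} = B_{ij} = b_{ij}$, since the $v$th row of $C$ selects precisely the $i$th row of $B$. Hence $AC = CB$. I would also record that $C$ has full column rank $m$, because its columns are the indicator vectors of disjoint nonempty classes, and that $C^\top C = K$ and $C^\top A C = KB$, both of which are immediate from the definitions.

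For the spectral inclusion, if $x \in \mathbb{R}^m$ satisfies $Bx = \theta x$, then applying the identity gives $A(Cx) = (AC)x = (CB)x = \theta(Cx)$, and $Cx \neq 0$ since $C$ is injective. Thus every eigenvalue of $B$ is an eigenvalue of $A$, with an eigenvector of the form $Cx$ that is constant on each class $U_i$ and hence lies in the column space of $C$. To obtain the statement \emph{with multiplicities}, I would note that $B$ is similar to the symmetric matrix $K^{-1/2}C^\top A C\,K^{-1/2} = K^{1/2}BK^{-1/2}$ (using $C^\top C = K$ and $C^\top A C = KB$), so $B$ is diagonalizable with a full set of $m$ linearly independent eigenvectors; their images under the injective map $C$ give $m$ linearly independent eigenvectors of $A$ that span the column space of $C$ and match the eigenvalues of $B$ with multiplicity.

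Next, for the remaining eigenvectors, the identity $AC = CB$ shows the column space of $C$ is $A$-invariant, and since $A$ is symmetric its orthogonal complement is $A$-invariant as well. Restricting the symmetric operator $A$ to this complement yields an orthonormal eigenbasis lying in the orthogonal complement of the column space of $C$; these are exactly the remaining eigenvectors, orthogonal to every column of $C$, and together with the $m$ lifted eigenvectors they account for all of $\mathbb{R}^n$. Finally, for the block-modification invariance, I would observe that replacing each block $A_{ij}$ by $A_{ij} + c_{ij}J$ amounts to the perturbation $A \mapsto A + C\Gamma C^\top$ with $\Gamma = (c_{ij})$, since $(C\Gamma C^\top)_{uv} = c_{ij}$ for $u \in U_i$, $v \in U_j$. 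Any vector $w$ orthogonal to the columns of $C$ satisfies $C^\top w = 0$, hence $C\Gamma C^\top w = 0$ and $(A + C\Gamma C^\top)w = Aw$, so each remaining eigenvector persists as an eigenvector of the modified matrix with unchanged eigenvalue.

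The step requiring the most care is the multiplicity bookkeeping: I must be sure the $m$ lifted vectors $Cx$ genuinely span the whole column space of $C$, which is precisely where the diagonalizability of $B$ through its symmetrization $K^{1/2}BK^{-1/2}$ is essential, since $B$ itself is not symmetric in general. Once that is secured, the two families of eigenvectors are complementary, the dimension counts $m$ and $n-m$ match, and all three assertions of the lemma follow.
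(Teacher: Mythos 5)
Your proof is correct. The paper itself gives no proof of this lemma --- it is quoted verbatim as a known result from Brouwer and Haemers \cite{BH12} --- and your argument via the intertwining identity $AC=CB$, the symmetrization $K^{1/2}BK^{-1/2}$ to secure diagonalizability of $B$, the $A$-invariance of the column space of $C$ and its orthogonal complement, and the identification of the block perturbation with $A\mapsto A+C\Gamma C^{\top}$ is precisely the standard proof given in that reference. No gaps.
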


We will denote by $K_{n}$, $S_{n}$ and $K_{n_1,n_2}$ the complete graph, star graph and complete bipartite graph, respectively such that $n_1 \geq n_2$ and $n=n_1+n_2$.

\section{Bounds on the sum of the two largest eigenvalues}\label{sec:twoevs}
Our main result of this section is a sharp lower bound on the sum of the two largest signless Laplacian (Theorem \ref{th3}). Some preparation is required. To obtain the main result we first prove some auxiliary results for a subgraph $H$ of $G$ by considering the two vertices of the largest degrees and their neighbors. 

Let $u$ and $v$ be the vertices with the two largest degrees of a graph $G$, that is, $|N(u)|=d_1(G)$ and $|N(v)| = d_2(G)$. A subgraph $H(V_{H},E_{H})$ of $G$ can be obtained by taking the vertex set as $V_{H} = \{ v_i \in V \, | \, v_i \in N(u) \cup N(v) \cup \{u\} \cup \{v\} \}$
and the edge set as
$E_{H}= \{ (v_i,v_j) \in E \,|\, v_i \, \in \{u,v\} \mbox{  and  } v_j  \in   N(u) \cup N(v) \}$.

We improve the lower bound from \cite{CRS07}, 
$q_1(G) \geq d_1(G)+1$. Roughly speaking, the proofs of our main results in this section (Theorems \ref{th3} and \ref{th4}) follow from the fact that $q_1(G)+q_2(G) \geq q_1(H)+q_2(H)$ (by Theorems \ref{interlacing_vertex_version}, \ref{interlacing_edge_version}, \ref{th_interlace_laplacian}) and also that $d_1(G)+d_2(G) = d_1(H)+d_2(H)$ since we did not remove any vertex from $N(u)$ and $N(v)$ of $G$ to build the graph $H$. In fact, if we prove that $q_1(H)+q_2(H) \geq d_1(H)+d_2(H)+1$, we are done. Before proving that, we need to introduce some notation, several preliminary results and two types of graphs obtained by the definition of $H = (V_{H}, E_{H}).$

Let $S_1 = N(u)\setminus \left( N(v) \cup v \right)$, $S_2 = N(u) \cap N(v)$ and $S_3 = N(v)\setminus \left( N(u) \cup u \right)$, such that $|S_1| = r$, $|S_2| = p$ and $|S_3| = s$. Figure 1 displays the two possible types of graphs isomorphic to $H$. Notice that if $u$ and $v$ are  not adjacent, $H$ belongs to $\mathcal{H}(p,r,s)$ such that $d_1(G) = d_1(H) = p+r$ and $d_2(G) = d_2(H) = p+s.$ If $u$ and $v$ are adjacent, $H$ belongs  to $\mathcal{G}(p,r,s)$ such that $d_1(G) = d_1(H) = p+r+1$ and $d_2(G) = d_2(H) = p+s+1.$
%
%
%
\begin{figure}[h]
\begin{center}
 \includegraphics[width=0.9\linewidth]{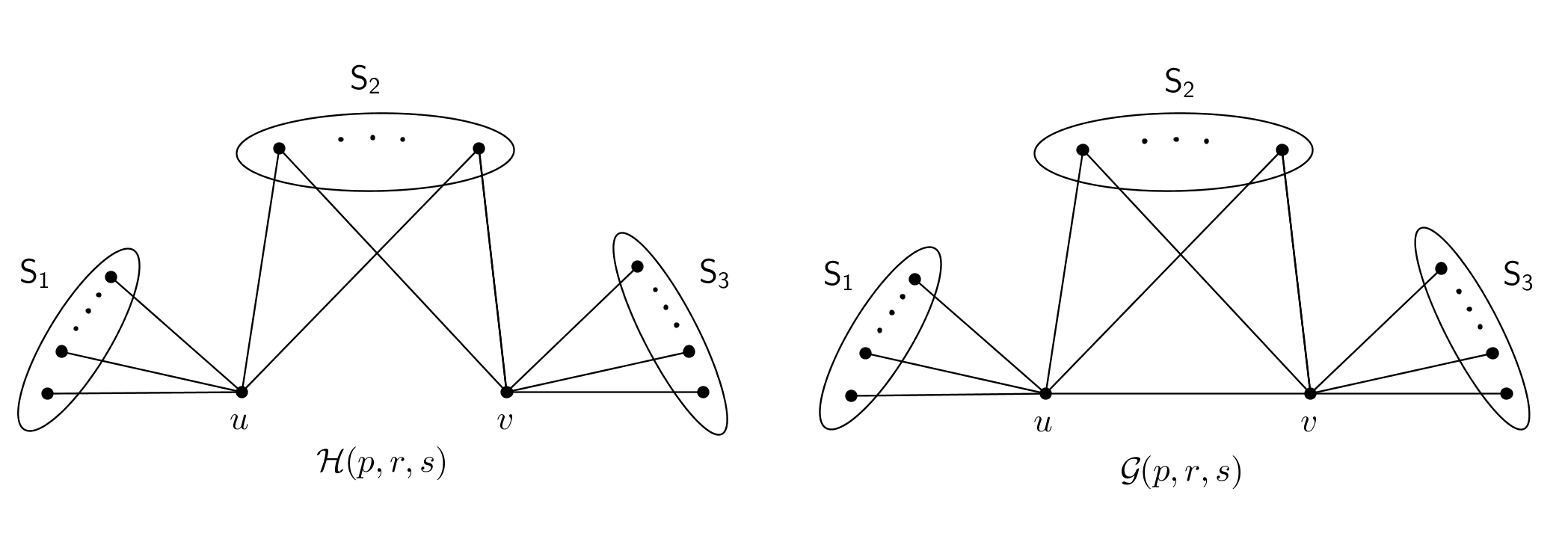}
\label{fig1}
\vspace{-0.4cm}
\caption{Families of graphs of the type $H(V_{H},E_{H})$.}
\end{center}
\end{figure}

Lemmas \ref{lemmaq1}  and \ref{lemmaq2} establish lower bounds to $q_1(G)$ and
$q_2(G)$ in terms of $d_1(G)$ and $d_2(G)$ that will be useful for our purposes here.

\begin{lemma}[\cite{CRS07}]\label{lemmaq1}
Let $G$ be a connected graph on $n \geq 4$ vertices. Then, $$q_{1}(G) \geq d_{1}(G)+1$$
with equality if and only if $G$ is the star $S_{n}.$
\end{lemma}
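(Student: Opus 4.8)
The plan is to prove the inequality by exhibiting an explicit test vector in the Rayleigh quotient for $q_1(G)$, and then to read the equality case directly off the slack in that estimate. The tool I would use is the variational characterization
$$q_1(G) = \max_{x \neq 0} \frac{x^\top Q x}{x^\top x}, \qquad x^\top Q x = \sum_{v_iv_j \in E}(x_i + x_j)^2,$$
the second identity being the standard edge expansion of the signless Laplacian quadratic form. Let $v$ be a vertex of maximum degree, so $|N(v)| = d_1(G)$. Since $q_1(G)$ dominates the Rayleigh quotient of any single vector, it suffices to evaluate that quotient at one well-chosen $x$.

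First I would take the test vector supported on $v$ and its neighbourhood, namely $x_v = d_1(G)$, $x_w = 1$ for every $w \in N(v)$, and $x_u = 0$ for all remaining vertices. Then $x^\top x = d_1(G)^2 + d_1(G) = d_1(G)\bigl(d_1(G)+1\bigr)$. To compute $x^\top Q x$ I would split $E$ into four classes: the $d_1(G)$ edges joining $v$ to $N(v)$, each contributing $\bigl(d_1(G)+1\bigr)^2$; the edges internal to $N(v)$, say $t$ of them, each contributing $4$; the edges from $N(v)$ to $V \setminus (N(v) \cup \{v\})$, say $s$ of them, each contributing $1$; and all other edges, contributing $0$. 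This gives $x^\top Q x = d_1(G)\bigl(d_1(G)+1\bigr)^2 + 4t + s$, whence
$$q_1(G) \geq \frac{d_1(G)\bigl(d_1(G)+1\bigr)^2 + 4t + s}{d_1(G)\bigl(d_1(G)+1\bigr)} = \bigl(d_1(G)+1\bigr) + \frac{4t+s}{d_1(G)\bigl(d_1(G)+1\bigr)} \geq d_1(G)+1.$$
The ratio $d_1(G):1$ between the weight on $v$ and on its neighbours is not arbitrary: it is exactly the value maximizing the star contribution, and it coincides with the Perron eigenvector of the signless Laplacian of $K_{1,d_1(G)}$, whose largest $Q$-eigenvalue is $d_1(G)+1$.

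For the equality case I would exploit that the only slack in the estimate is the nonnegative term $4t+s$. If $q_1(G) = d_1(G)+1$, both displayed inequalities must be tight, forcing $4t + s = 0$, i.e.\ $t = s = 0$. Thus $N(v)$ is an independent set and no edge leaves $N(v)$ except to $v$; equivalently every neighbour of $v$ has degree $1$, so the connected component of $v$ is precisely the star $K_{1,d_1(G)}$. Since $G$ is connected, this component is all of $G$, giving $G = K_{1,n-1} = S_n$ with $d_1(G) = n-1$. Conversely, a direct computation of the $Q$-spectrum of $S_n$, namely $n, 1^{(n-2)}, 0$, shows $q_1(S_n) = n = d_1(S_n)+1$, so the star indeed attains equality.

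The inequality itself is routine once the weighting $d_1(G):1$ is found, so the only real decision is how to treat equality. I expect the mild obstacle there to be organizing the argument so that connectivity is used correctly: vanishing of the extra edges confines the component of $v$ to a star, and it is connectivity — rather than any further spectral input — that upgrades this to $G = S_n$. Note that no separate check that the test vector is an eigenvector is needed for this direction, since the slack term $4t+s$ already pins down the graph.
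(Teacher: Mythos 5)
Your proof is correct. Note that the paper does not prove this statement at all: it is quoted as a known result from Cvetkovi\'c, Rowlinson and Simi\'c \cite{CRS07}, so there is no in-paper argument to compare against. Your self-contained Rayleigh-quotient argument checks out: the quadratic form identity $x^\top Qx=\sum_{v_iv_j\in E}(x_i+x_j)^2$ is the standard one, the bookkeeping $x^\top x=d_1(d_1+1)$ and $x^\top Qx=d_1(d_1+1)^2+4t+s$ is right, and the weighting $d_1:1$ is indeed the Perron vector of $K_{1,d_1}$, which is why the slack reduces cleanly to the nonnegative term $4t+s$. The equality analysis is also sound: $q_1=d_1+1$ squeezes the chain $q_1\geq (d_1+1)+\tfrac{4t+s}{d_1(d_1+1)}\geq d_1+1$, forcing $t=s=0$, so every neighbour of $v$ has degree $1$ and connectivity upgrades the component of $v$ to all of $G$; the converse computation of the $Q$-spectrum $n,1^{(n-2)},0$ of $S_n$ is correct. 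Incidentally, your argument needs only $n\geq 2$ and connectivity, so it is slightly more general than the stated hypothesis $n\geq 4$ (e.g.\ $S_3=P_3$ also attains equality), which does no harm here. A nice feature of your route is that, as you observe, you never need to verify that the test vector is an eigenvector: the combinatorial vanishing of $4t+s$ alone pins down the extremal graph.
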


\begin{lemma}[\cite{Das10}]\label{lemmaq2}
Let $G$ be a graph. Then $$q_{2}(G) \geq d_{2}(G) -1.$$
\end{lemma}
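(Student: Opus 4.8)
The plan is to apply the Cauchy version of the Interlacing Theorem (Theorem~\ref{the:interlacinghaemers}) to a single, carefully chosen $2\times 2$ principal submatrix of $Q=A+D$. Let $u$ and $v$ be two distinct vertices realizing the two largest degrees, so that $d(u)=d_1(G)$ and $d(v)=d_2(G)$. Taking $S$ to be the $n\times 2$ matrix whose columns are the standard basis vectors $e_u$ and $e_v$, we have $S^\top S=I_2$, and $B=S^\top Q S$ is exactly the principal submatrix of $Q$ on the rows and columns indexed by $u$ and $v$, namely
\[
B=\begin{pmatrix} d_1(G) & c \\ c & d_2(G)\end{pmatrix},
\qquad c=\begin{cases}1,& u\sim v,\\ 0,& u\not\sim v.\end{cases}
\]
By part $(i)$ of Theorem~\ref{the:interlacinghaemers} applied to $A=Q$ (whose ordered eigenvalues are the $q_i(G)$) with $m=2$, the smaller eigenvalue $\mu_2$ of $B$ satisfies $q_2(G)\ge\mu_2$, so it suffices to show $\mu_2\ge d_2(G)-1$.

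Next I would simply diagonalize the $2\times 2$ matrix $B$. Writing $d_1=d_1(G)$ and $d_2=d_2(G)$, its eigenvalues are
\[
\mu_{1,2}=\frac{d_1+d_2}{2}\pm\sqrt{\left(\frac{d_1-d_2}{2}\right)^2+c^2},
\]
so the relevant quantity is $\mu_2=\frac{d_1+d_2}{2}-\sqrt{\left(\frac{d_1-d_2}{2}\right)^2+c^2}$. If $c=0$ (the vertices are non-adjacent) this collapses to $\mu_2=d_2$, which already yields the stronger inequality $\mu_2\ge d_2$. If $c=1$, setting $t=\tfrac12(d_1-d_2)\ge 0$, the desired bound $\mu_2\ge d_2-1$ is equivalent, after rearranging, to $t+1\ge\sqrt{t^2+1}$; squaring both nonnegative sides reduces this to $2t\ge 0$, which holds. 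In either case $q_2(G)\ge\mu_2\ge d_2(G)-1$.

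There is essentially no deep obstacle here: the whole content is the choice of the test space $S$. The one point deserving comment is that the slack of $-1$ in the statement is forced precisely by the adjacent case $c=1$, since the non-adjacent case gives $q_2(G)\ge d_2(G)$ outright, and it is the extra $+1$ on the off-diagonal of $B$ that can push $\mu_2$ down by one unit, explaining why $d_2(G)-1$ rather than $d_2(G)$ is the right general bound. The only caveat is that Theorem~\ref{the:interlacinghaemers} is stated for $m<n$, so the interlacing step requires $n\ge 3$; the cases $n\le 2$ are settled by direct inspection. Ties in the degree sequence are harmless, as any two distinct vertices attaining $d_1(G)$ and $d_2(G)$ may be selected for $u$ and $v$.
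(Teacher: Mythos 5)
Your argument is correct: taking $S=[e_u\ e_v]$ gives $S^\top QS=\bigl(\begin{smallmatrix} d_1 & c\\ c & d_2\end{smallmatrix}\bigr)$ with $c\in\{0,1\}$, Cauchy interlacing gives $q_2(G)\ge\mu_2$, and your computation $t+1\ge\sqrt{t^2+1}$ correctly yields $\mu_2\ge d_2(G)-1$ (with the stronger $\mu_2=d_2(G)$ when $u\not\sim v$). Note, however, that the paper does not prove this statement at all: it is quoted as a known result of Das \cite{Das10}, so there is no in-paper proof to compare against. Your two-line interlacing argument is a clean, self-contained derivation consistent with the interlacing machinery (Theorem~\ref{the:interlacinghaemers}) that the paper uses elsewhere, and your handling of the side conditions ($m<n$ forcing $n\ge 3$, the trivial small cases, and ties in the degree sequence) is appropriate.
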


Next, we improve the lower bounds of the previous lemmas for all graphs in $\mathcal{H}(p,r,s)$ and $\mathcal{G}(p,r,s)$. This will be crucial to later prove our main result.

\begin{proposition}\label{pr4}
For $p \geq 1$ and $r \geq s \geq 1,$  let $G \in \mathcal{H}(p,r,s)$ be a graph on $n \geq 3$ vertices. Then $$q_2(G) >
 d_2(G).$$
\end{proposition}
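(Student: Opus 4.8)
My plan is to bound $q_2(G)$ from below through the Courant–Fischer min–max characterization combined with the signless Laplacian quadratic form $x^{\top}Qx=\sum_{(i,j)\in E}(x_i+x_j)^2$. Concretely, I will exhibit a two-dimensional subspace $W\subseteq\mathbb{R}^{V}$ on which the Rayleigh quotient of $Q$ is everywhere strictly larger than $d_2(G)=p+s$; since $q_2(G)\ge\min_{0\neq x\in W}\tfrac{x^{\top}Qx}{x^{\top}x}$, this forces $q_2(G)>p+s$. Equivalently, after normalizing the two chosen vectors to an orthonormal pair $S=[w_1/\|w_1\|,\,w_2/\|w_2\|]$, I can invoke the Interlacing Theorem \ref{the:interlacinghaemers}(i): $B=S^{\top}QS$ will turn out diagonal, and $q_2(G)\ge\mu_2=\min$ of its two diagonal entries.

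First I would record the structure of $G=H\in\mathcal{H}(p,r,s)$: since $u,v$ are non-adjacent, $u$ is joined exactly to $S_1\cup S_2$ and $v$ exactly to $S_2\cup S_3$, and there are no edges inside $S_1\cup S_2\cup S_3$. The key structural observation is that no edge joins the set $\{u\}\cup S_1$ to the set $\{v\}\cup S_3$. I then take $w_1$ equal to $p+s$ on $u$, equal to $1$ on each vertex of $S_1$, and $0$ elsewhere; and $w_2$ equal to $p+s$ on $v$, equal to $1$ on each vertex of $S_3$, and $0$ elsewhere. Their supports are disjoint, so $w_1\perp w_2$, and because no edge joins the two supports the cross term vanishes, $w_1^{\top}Qw_2=0$. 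Consequently, writing $R_i=w_i^{\top}Qw_i/\|w_i\|^2$, the Rayleigh quotient of any $x=aw_1+bw_2$ is the weighted average $\tfrac{a^2\|w_1\|^2R_1+b^2\|w_2\|^2R_2}{a^2\|w_1\|^2+b^2\|w_2\|^2}\ge\min(R_1,R_2)$.

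The remaining step is to check $R_1>p+s$ and $R_2>p+s$. Using the edge form with $m=p+s$, one gets $w_2^{\top}Qw_2=s(m+1)^2+pm^2$ and $\|w_2\|^2=m^2+s$, whence $w_2^{\top}Qw_2-m\|w_2\|^2=s(m+1)=s(p+s+1)>0$, so $R_2>p+s$. The analogous computation for $w_1$ (with $u,S_1,r$ replacing $v,S_3,s$) gives surplus $(r-s)(p+s)^2+r(p+s+1)>0$, using $r\ge s\ge 1$, so $R_1>p+s$. Combining, $\min(R_1,R_2)>p+s$, and therefore $q_2(G)\ge\min(R_1,R_2)>p+s$, as claimed.

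I expect the only delicate point to be the \emph{strictness}. The naive choice $w_1=e_u$, $w_2=e_v$ yields Rayleigh quotients $p+r$ and $p+s$, giving merely $q_2(G)\ge p+s$; worse, when $r=s$ the first quotient is also exactly $p+s$, so that approach cannot deliver a strict bound. The fix is precisely to append the \emph{private} neighbors $S_1$ and $S_3$, which pushes each Rayleigh quotient strictly past the corresponding vertex degree (the familiar star phenomenon $q_1(K_{1,k})=k+1>k$), while the fact that there are no edges between $\{u\}\cup S_1$ and $\{v\}\cup S_3$ keeps the $2\times 2$ test matrix diagonal, so the two separate boosts never interfere with one another.
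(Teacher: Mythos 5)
Your proof is correct, but it follows a genuinely different route from the paper. The paper works with the explicit block form of $Q(G)$, identifies the eigenvalues $2$ and $1$ with their multiplicities, reduces the remaining spectrum to a $5\times 5$ quotient matrix of the equitable partition, and then analyses the sign of its degree-five characteristic polynomial at $x=d_2(G)$ (together with Lemma \ref{lemmaq1}) to locate $q_2(G)$. You instead apply the min--max characterization (equivalently, Theorem \ref{the:interlacinghaemers}(i) with the $2\times n$ matrix $S$ built from $w_1,w_2$) to the two test vectors supported on $\{u\}\cup S_1$ and $\{v\}\cup S_3$; the absence of edges between these supports makes $S^{\top}QS$ diagonal, and your surplus computations $w_2^{\top}Qw_2-(p+s)\|w_2\|^2=s(p+s+1)>0$ and $w_1^{\top}Qw_1-(p+s)\|w_1\|^2=(r-s)(p+s)^2+r(p+s+1)>0$ both check out under $p\ge 1$, $r\ge s\ge 1$. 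Your argument is shorter, avoids any polynomial sign analysis and any appeal to Lemma \ref{lemmaq1}, and makes the source of the strict gap transparent (the private neighbours $S_1$, $S_3$ boosting each Rayleigh quotient past the corresponding degree); what it does not give is the full quotient-matrix machinery that the paper reuses in Propositions \ref{pr5}--\ref{pr2} to extract the finer quantitative refinements such as $q_1(G)>d_1(G)+\tfrac{3}{2}$ or the $p/n$ corrections, for which the characteristic-polynomial approach is better suited.
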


\begin{proof}
For $p \geq 1$, $r \geq s \geq 1$, consider $G \in \mathcal{H}(p,r,s).$ Labeling the vertices in a convenient way, we get
\[ Q(G) = \left( \begin{array}{c|c|c|c|c}
 p+r   &     0      &  \mathbf{1}_{1 \times p}   & \mathbf{1}_{1 \times r} & \mathbf{0}_{1 \times s}   \\  \hline
0       &    p+s    &   \mathbf{1}_{1 \times p}    & \mathbf{0}_{1 \times r}   & \mathbf{1}_{1 \times s} \\  \hline
 \mathbf{1}_{p \times 1} & \mathbf{1}_{p \times 1} & 2\mathbf{I}_{p \times p} & \mathbf{0}_{p \times r} & \mathbf{0}_{p \times s} \\ \hline
 \mathbf{1}_{r \times 1} & \mathbf{0}_{r \times 1} & \mathbf{0}_{r \times p} & \mathbf{I}_{r \times r} & \mathbf{0}_{r \times s} \\ \hline
 \mathbf{0}_{s \times 1} & \mathbf{1}_{s \times 1} & \mathbf{0}_{s \times p} & \mathbf{0}_{s \times r} & \mathbf{I}_{s \times s} \\
\end{array}\right ).
\]

Observe that $\mathbf{x}_{j} = e_{3}-e_{j}$, for $j=4,\ldots,p+2$ are eigenvectors associated to the eigenvalue $2$ which has multiplicity at least $p-1.$ Also, let us define $\mathbf{y}_{j} = e_{p+3}-e_{j}$ for each $j=p+4,\ldots,p+r+2$ and $\mathbf{z}_{j} = e_{p+r+3}-e_{j}$ for each $j=p+r+4,\ldots,p+r+s+2.$ Observe that $\mathbf{y}_{j}$ and $\mathbf{z}_{j}$ are eigenvectors associated to the eigenvalue $1$ with multiplicity at least $r+s-2.$ The  remaining $5$ eigenvalues are the same of the quotient matrix $M$ according to the equitable partition of $Q(G)$:
$$M = \left(
\begin{array}{ccccc}
p+r &  0   &   p    &   r  & 0 \\
0  & p+s  &    p    &    0  &  s \\
1 &    1 &    2 &     0  & 0\\
1 &   0  &   0   & 1    &  0 \\
0 &   1  &  0   &   0   & 1 \\
\end{array}
 \right) .$$

The characteristic polynomial of M is given by $f(x,p,r,s) = x^{5}-(s+r+p+6)x^4+((r+p+3)s+(p+3)r+p^2+6p+5)x^{3}+((-2r-p-4)s+(-2p-2)r-2p^2-6p-2)x^{2}+((s+r)p+p^{2}+2p)x.$

Considering $r=s+k$, where $k \geq 0$ note that $f(d_2(G),p,s+k,s)=f(p+s,p,s+k,s) = (s+p) (ks^2 + s^2 + 2kps + 2p - 2ks - 2s + kp^2 - kp) > 0.$ As $f(0,p,r,s)<0$, if we take $q_2(G) < y < q_1(G),$ then $f(y,p,r,s)<0.$ So, since $f(d_2(G),p,r,s) > 0$ and from Lemma \ref{lemmaq1}, we get $q_2(G) > d_{2}(G)$.
\end{proof}

\begin{proposition}\label{pr5}
For $p \geq 1$ and \ $r \geq 1,$ let $G \in \mathcal{H}(p,r,0)$ be a graph on $n \geq 3$ vertices. Then $$q_2(G) \geq d_2(G).$$ Equality holds if and only if $G=P_4$.
\end{proposition}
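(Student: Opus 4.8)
The plan is to specialize the computation in the proof of Proposition \ref{pr4} to the case $s=0$ and read off $q_2(G)$ from the characteristic polynomial of the quotient matrix. First I would write down $Q(G)$ for $G\in\mathcal{H}(p,r,0)$ by deleting the (now empty) block corresponding to $S_3$ in the matrix of Proposition \ref{pr4}. Exactly as there, the vectors $e_3-e_j$ supported on the $p$ common neighbours yield the eigenvalue $2$ with multiplicity $p-1$, and the analogous vectors supported on the $r$ private neighbours of $u$ yield the eigenvalue $1$ with multiplicity $r-1$; the remaining four $Q$-eigenvalues are those of the $4\times 4$ quotient matrix
\[
M=\begin{pmatrix} p+r & 0 & p & r\\ 0 & p & p & 0\\ 1 & 1 & 2 & 0\\ 1 & 0 & 0 & 1\end{pmatrix}
\]
of the equitable partition $\{\{u\},\{v\},S_2,S_1\}$.

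Next I would compute $g(x):=\det(xI-M)$ and cast it in the factored form
\[
g(x)=\bigl(x^2-(p+2)x+p\bigr)\bigl(x^2-(p+r+1)x+p\bigr)-p(x-1)(x-p).
\]
Two features drive the argument. Since $G$ is bipartite we expect $q_n(G)=0$, and indeed $g(0)=p^2-p\cdot p=0$, so the roots of $g$ are real, nonnegative, and may be listed as $\nu_1\ge\nu_2\ge\nu_3\ge\nu_4=0$. Moreover Lemma \ref{lemmaq1} gives $q_1(G)\ge d_1(G)+1=p+r+1>2$, and since every $Q$-eigenvalue that is not a root of $g$ equals $1$ or $2$, the largest root must satisfy $\nu_1=q_1(G)\ge p+r+1$; in particular $\nu_1>d_2(G)$. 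Writing $g(t)=t\,(t-\nu_1)(t-\nu_2)(t-\nu_3)$, for any $t$ with $0<t<\nu_1$ the factors $t$ and $(t-\nu_1)$ have opposite signs, so $g(t)>0$ forces $(t-\nu_2)(t-\nu_3)<0$, i.e.\ $\nu_3<t<\nu_2$; as $\nu_1,\nu_2$ are both $Q$-eigenvalues with $\nu_1\ge\nu_2$, this gives $q_2(G)\ge\nu_2>t$.

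It then remains to evaluate $g$ at $t=d_2(G)$. A direct substitution gives $g(p)=rp^2$. When $p\ge 2$ the second-largest degree is $d_2(G)=p$, and $g(p)=rp^2>0$, so by the previous paragraph $q_2(G)>p=d_2(G)$ strictly. When $p=1$ the second-largest degree is realized by the unique common neighbour and equals $2$; here one computes $g(2)=2(r-1)\ge 0$, so $q_2(G)\ge 2=d_2(G)$, with strict inequality as soon as $r\ge 2$. Hence equality forces $p=1$ and $r=1$, that is $n=4$ and $G\in\mathcal{H}(1,1,0)$, which is precisely the path $P_4$; a direct check that its $Q$-spectrum is $\{\,2+\sqrt2,\,2,\,2-\sqrt2,\,0\,\}$ confirms $q_2(P_4)=2=d_2(P_4)$, closing the equality case.

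The main obstacle I anticipate is not the inequality itself, which follows cleanly from the quartic sign analysis, but the bookkeeping around the quantity $d_2(G)$ in the equality statement. For every member of $\mathcal{H}(p,r,0)$ with $p\ge 2$ the designated vertex $v$ does have second-largest degree $p$, but for the single degenerate graph $P_4$ the vertex $v$ has degree $1$ and is overtaken by a common neighbour of degree $2$; so the evaluation point must be the genuine second-largest degree rather than $p+s$, and it is exactly this degenerate $p=1$ boundary that isolates $P_4$ as the unique equality case. I would therefore be careful to separate the branches $p\ge 2$ and $p=1$ and to track which root of $g$ (versus the trivial eigenvalues $1,2$) actually realizes $q_2(G)$ in each branch.
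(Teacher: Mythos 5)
Your proposal is correct and follows essentially the same route as the paper: the same equitable partition, the same $4\times 4$ quotient matrix, the same characteristic polynomial (your factored form expands to the paper's $f(x,p,r)$), and the same strategy of deducing $q_2(G)>t$ from the sign of that polynomial at $t=d_2(G)$ together with Lemma \ref{lemmaq1}. If anything, your handling of the equality case is more careful than the paper's: the paper evaluates at $x=p$ throughout (obtaining $rp^2>0$, which for $p=1$ only yields $q_2>1$), whereas you observe that for $p=1$ the genuine second-largest degree is $2$ and compute $g(2)=2(r-1)$, which is precisely what is needed to isolate $P_4$ as the unique equality case.
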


\begin{proof}
For $p,r \geq 1,$ consider $G \in \mathcal{H}(p,r,0).$ Labeling the vertices in a convenient way, we get

\[ Q(G) = \left( \begin{array}{c|c|c|c}
 p+r   &     0      &  \mathbf{1}_{1 \times p}   & \mathbf{1}_{1 \times r} \\  \hline
0       &    p    &   \mathbf{1}_{1 \times p}    & \mathbf{0}_{1 \times r}  \\  \hline
 \mathbf{1}_{p \times 1} & \mathbf{1}_{p \times 1} & 2\mathbf{I}_{p \times p} & \mathbf{0}_{p \times r}  \\ \hline
 \mathbf{1}_{r \times 1} & \mathbf{0}_{r \times 1} & \mathbf{0}_{r \times p} & \mathbf{I}_{r \times r}  \\
\end{array}\right ).
\]

If $p=r=1$, then $q_2(G) = d_2(G)=2$. If $p \geq 1$ and $r \geq 2$, observe that $\mathbf{x}_{j} = e_{3}-e_{j}$, for $j=4,\ldots,p+2$ are eigenvectors associated to the eigenvalue $2$ which has multiplicity at least $p-1.$

Let us define $\mathbf{y}_{j} = e_{p+3}-e_{j}$ for each $j=p+4,\ldots,p+r+2.$ Observe that $\mathbf{y}_{j}$ are eigenvectors associated to the eigenvalue $1$ with multiplicity
at least $r-1.$ The  remaining $4$ eigenvalues are the same of the quotient matrix $M$ according to the equitable partition of $Q(G):$

$$M = \left(
\begin{array}{cccc}
p+r &  0   &   p    &   r \\
0  & p  &    p    &    0  \\
1 &    1 &    2 &     0  \\
1 &   0  &   0   & 1  \\
\end{array}
 \right) .$$

The characteristic polynomial of $M$ is given by $f(x,p,r) = x^{4}+(-r-2p-3)x^3+((p+2)r+p^2+4p+2)x^{2}+(-pr-p^2-2p)x.$ As $f(-1,p,r)>0$, if we take $q_2(G) < y < q_1(G),$ then $f(y,p,r)<0.$

Note that $f(d_2(G),p,r)=f(p,p,r) = rp^2 > 0.$ Therefore, from Lemma \ref{lemmaq1}, we have $q_2(G) > d_{2}(G)$. So $q_2(G) \geq d_2(G)$ with equality if and only if $G=P_4$.
\end{proof}
%
%

\begin{proposition}\label{pr3}
For $r,s \geq 1,$ let $G \in \mathcal{G}(0,r,s).$  Then,
$$q_1(G) + q_{2}(G) > d_1(G) + d_{2}(G)+1.$$
\end{proposition}

\begin{proof}
For $r,s \geq 1,$ let $G \in \mathcal{G}(0,r,s).$  
Labeling the vertices of $G$ conveniently, we get

\[ Q(G) = \left( \begin{array}{c|c|c|c}
 r+1   &     1      &  \mathbf{1}_{1 \times r} & \mathbf{0}_{1 \times s}   \\  \hline
 1       &    s+1    &  \mathbf{0}_{1 \times r}   & \mathbf{1}_{1 \times s} \\  \hline
 \mathbf{1}_{r \times 1} & \mathbf{0}_{r \times 1} & \mathbf{I}_{r \times r} & \mathbf{0}_{r \times s} \\ \hline
 \mathbf{0}_{s \times 1} & \mathbf{1}_{s \times 1} & \mathbf{0}_{s \times r} & \mathbf{I}_{s \times s} \\
\end{array}\right ).
\]

Let us define $\mathbf{y}_{j} = e_{3}-e_{j}$ for each $j=4,\ldots,r+2,$  and $\mathbf{z}_{j} = e_{r+3}-e_{j}$
for each $j=r+4,\ldots,r+s+2.$ Observe that $\mathbf{y}_{j}$ and $\mathbf{z}_{j}$ are eigenvectors associated to the eigenvalue $1$ with multiplicity
at least $r+s-2.$ The  remaining $4$ eigenvalues are the same of the quotient matrix $M$ according to the equitable partition of $Q(G):$
$$M = \left(
\begin{array}{cccc}
r+1 &  1   &  r  &  0 \\
1     & s+1  &  0  &  s \\
1     &   0  &  1  &  0\\
0     &   1  &  0  &  1 \\
\end{array}
 \right) .$$

The characteristic polynomial of $M$ is given by $f(x,r,s) = x^4+(-r-s-4)x^3+((r+2)s +2r+5)x^2 + (-r-s-2)x.$ As $f(0,r,s)>0$, if we take $q_2(G) < y < q_1(G),$ then $f(y,r,s)<0.$
Since $d_2(G)=s+1,$ we get
$f(d_2(G),r,s)=s(s+1)(r-s) \geq 0$ which implies $q_{2}(G) \geq d_{2}(G).$ From the equality conditions of Lemma \ref{lemmaq1}, $q_1(G) > d_1(G) + 1$ and the result follows.
\end{proof}

Next, in Proposition \ref{pr2}, we present some bounds to $q_1(G)$ and $q_2(G)$ when $G \in \mathcal{G}(p,r,s)$ for $p \geq 1$, $r \geq s \geq 0.$

\begin{proposition}\label{pr2}
For $p \geq 1$, $r \geq s \geq 0,$ let $G \in \mathcal{G}(p,r,s)$ be a graph on $n \geq 3$ vertices. Then

\begin{description}
\item[$(i)$] If $r=p=1$ and $s=0,$ then $q_2(G) = d_2(G);$
\item[$(ii)$] if $p = 1$ and $r=s$, then $q_1(G) > d_1(G) + \frac{3}{2}$ and $q_2(G) > d_2(G) -\frac{1}{2};$
\item[$(iii)$] if $p \geq 2$ and $r=s$, then $q_1(G) > d_1(G) + 2;$
\item[$(iv)$] if $p \geq 1$ and $r \geq s+3$, then $q_2(G) > d_2(G);$
\item[$(v)$] if $p \geq 1$ and $r \in \{ s+1,s+2 \},$ then $q_1(G) > d_1(G)+1+\frac{p}{n}$ and $q_2(G) > d_2(G)-\frac{p}{n}.$
\end{description}
\end{proposition}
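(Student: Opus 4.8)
The plan is to follow the method of Propositions \ref{pr4}, \ref{pr5} and \ref{pr3}: label the vertices of $G\in\mathcal G(p,r,s)$ as $u,v$, then $S_2,S_1,S_3$, write down $Q(G)$, and peel off the ``trivial'' eigenvalues. The $p$ vertices of $S_2$ have degree $2$ and give eigenvectors $e_i-e_j$ for the eigenvalue $2$ with multiplicity $p-1$, while the degree‑$1$ vertices of $S_1$ and $S_3$ give eigenvectors $e_i-e_j$ for the eigenvalue $1$ with multiplicity $r+s-2$. The remaining five $Q$‑eigenvalues are those of the equitable quotient matrix
\[
M=\begin{pmatrix}
p+r+1 & 1 & p & r & 0\\
1 & p+s+1 & p & 0 & s\\
1 & 1 & 2 & 0 & 0\\
1 & 0 & 0 & 1 & 0\\
0 & 1 & 0 & 0 & 1
\end{pmatrix},
\]
whose characteristic polynomial $f(x,p,r,s)=\det(xI-M)$ I would compute once. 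A convenient route is the Schur complement on the diagonal block $\mathrm{diag}(2,1,1)$, which yields $f=(x-2)(x-1)^2\,\Phi(x)$ with $\Phi=g_1g_2-h^2$, where $g_1=(p+r+1)-x+\frac{p}{x-2}+\frac{r}{x-1}$, $g_2=(p+s+1)-x+\frac{p}{x-2}+\frac{s}{x-1}$ and $h=1+\frac{p}{x-2}$. Since $p\ge 1$ gives $q_1>d_1+1\ge 4>2$, the value $q_1$ is the largest root $\theta_1$ of $f$; and because the roots of $f$ are $Q$‑eigenvalues, the second largest root $\theta_2$ always satisfies $q_2\ge\theta_2$. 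All sign arguments below are carried out on the polynomial $f$ itself, so no pole of $\Phi$ interferes.

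For part (i) I would specialise to $(p,r,s)=(1,1,0)$, the paw graph, where $M=Q(G)$; a direct computation gives $f(x)=(x-1)(x-2)(x^2-5x+2)$, with roots $1,2,\tfrac{5\pm\sqrt{17}}{2}$. Ordering them shows $q_2=2=d_2(G)$.

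The symmetric cases (ii) and (iii), where $r=s$, I would treat via the automorphism of $G$ that swaps $u\leftrightarrow v$ and $S_1\leftrightarrow S_3$ while fixing $S_2$. This splits $M$ into a symmetric part $M_+=\bigl(\begin{smallmatrix}p+r+2 & p & r\\ 2 & 2 & 0\\ 1 & 0 & 1\end{smallmatrix}\bigr)$ and an antisymmetric part $M_-=\bigl(\begin{smallmatrix}p+r & r\\ 1 & 1\end{smallmatrix}\bigr)$, with characteristic polynomials $c(\lambda)=-r(\lambda-2)+(\lambda-1)[(\lambda-(p+r+2))(\lambda-2)-2p]$ and $\lambda^2-(p+r+1)\lambda+p$. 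Since $\lambda_-^{\max}=\tfrac12\bigl((p+r+1)+\sqrt{(p+r+1)^2-4p}\bigr)<p+r+1=d_1(G)$, the top root of the cubic $c$ equals $q_1$. Evaluating $c$ at the targets then settles the $q_1$ bounds: $c(d_1+2)=-p^2-p-pr+2r+2\le -4<0$ for $p\ge 2$ (part (iii)), and for $p=1$ one gets $c\bigl(d_1+\tfrac32\bigr)=-\tfrac{r^2}{2}-\tfrac{3r}{2}-\tfrac{25}{8}<0$ (part (ii)); in each case the negative value forces a root beyond the target. For the $q_2$ bound in (ii) I would use $q_2\ge\lambda_-^{\max}$ and check $\lambda_-^{\max}>d_2-\tfrac12$ directly, which reduces to $\sqrt{r^2+4r}>r+1$, i.e.\ $r>\tfrac12$.

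The cases $r>s$ (parts (iv) and (v)) lack this symmetry, and I would argue exactly as in Proposition \ref{pr4}, using that $f<0$ on $(\theta_2,\theta_1)$ and $f>0$ on $(\theta_1,\infty)$, together with $q_1>d_1\ge d_2$ from Lemma \ref{lemmaq1}. For (iv), showing $f(d_2,p,r,s)>0$ and noting $d_2<\theta_1$ places $d_2$ below $\theta_2$, so $q_2\ge\theta_2>d_2$. For (v), write $n=p+r+s+2$: the bound $q_1>d_1+1+\tfrac{p}{n}$ follows from $f\bigl(d_1+1+\tfrac{p}{n}\bigr)<0$ (any point where $f<0$ lies below $\theta_1$), and the bound $q_2>d_2-\tfrac{p}{n}$ from $f\bigl(d_2-\tfrac{p}{n}\bigr)>0$ together with $d_2-\tfrac{p}{n}<d_1<\theta_1$, which forces $d_2-\tfrac{p}{n}<\theta_2\le q_2$. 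The main obstacle is precisely this last pair of evaluations: substituting the rational point $d_2-\tfrac{p}{n}=p+s+1-\tfrac{p}{p+r+s+2}$ into the quintic produces heavy algebra, and one must verify the sign of the resulting expression uniformly over both ranges $r=s+1$ and $r=s+2$. This is genuinely delicate because, unlike in (iv), here $q_2$ can fall strictly below $d_2$ (so the slack $\tfrac{p}{n}$ is essential and the estimate is nearly tight); I expect the two sign determinations to reduce to explicit factorable expressions in $p,r,s$, as in the earlier propositions, whose positivity/negativity is the computational heart of the proof.
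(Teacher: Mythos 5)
Your setup (peeling off the eigenvalues $2$ and $1$, reducing to the same $5\times 5$ quotient matrix $M$, and locating $q_1=\theta_1$ and $q_2\ge\theta_2$ by sign analysis of $f$ at the target points) is exactly the paper's strategy, and your handling of parts (i), (iv) and (v) coincides with it --- including the fact that, like the paper, you leave the decisive sign verifications in (iv) and (v) as polynomial computations to be carried out (the paper states the factored value of $f(d_2)$ for (iv) and for $r=s+1$, and appeals to ``computational support'' for $r=s+2$, so you are at the same level of completeness there; your $n=p+r+s+2$ is the correct count, correcting a typo in the paper). Where you genuinely diverge is in the symmetric cases (ii) and (iii): the paper evaluates the full quintic $f$ at $d_1+\tfrac32$, $d_2-\tfrac12$ and $d_1+2$, whereas you exploit the $u\leftrightarrow v$, $S_1\leftrightarrow S_3$ automorphism to split $M$ into $M_+\oplus M_-$, so that $q_1$ is the top root of a cubic and the $q_2$ bound in (ii) drops out of the explicit quadratic root $\lambda_-^{\max}$. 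I checked that your $M_\pm$, the values $c(d_1+2)=-(p^2+p+pr-2r-2)$ and $c(d_1+\tfrac32)=-\tfrac{r^2}{2}-\tfrac{3r}{2}-\tfrac{25}{8}$, and the reduction of $\lambda_-^{\max}>d_2-\tfrac12$ to $r>\tfrac12$ are all correct and consistent with the paper's quintic evaluations (indeed $f=\det(xI-M_+)\det(xI-M_-)$ reproduces the paper's factorizations); this route buys shorter algebra and a cleaner identification of which factor supplies $q_2$. One shared caveat: your condition $r>\tfrac12$ makes explicit that the $q_2$ inequality in (ii) needs $r=s\ge 1$ (it fails for $K_3=\mathcal G(1,0,0)$), a restriction the paper's proof also imposes silently even though the proposition's hypotheses allow $s=0$; your version at least surfaces it.
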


\begin{proof}
For $p \geq 1$, $r \geq s \geq 1$, let $G \in \mathcal{G}(p,q,r).$  Labeling the vertices in a convenient way, we obtain

\[ Q(G) = \left( \begin{array}{c|c|c|c|c}
 p+r+1   &     1      &  \mathbf{1}_{1 \times p}   & \mathbf{1}_{1 \times r} & \mathbf{0}_{1 \times s}   \\  \hline
 1       &    p+s+1    &   \mathbf{1}_{1 \times p}    & \mathbf{0}_{1 \times r}   & \mathbf{1}_{1 \times s} \\  \hline
 \mathbf{1}_{p \times 1} & \mathbf{1}_{p \times 1} & 2\mathbf{I}_{p \times p} & \mathbf{0}_{p \times r} & \mathbf{0}_{p \times s} \\ \hline
 \mathbf{1}_{r \times 1} & \mathbf{0}_{r \times 1} & \mathbf{0}_{r \times p} & \mathbf{I}_{r \times r} & \mathbf{0}_{r \times s} \\ \hline
 \mathbf{0}_{s \times 1} & \mathbf{1}_{s \times 1} & \mathbf{0}_{s \times p} & \mathbf{0}_{s \times r} & \mathbf{I}_{s \times s} \\
\end{array}\right ).
\]

Observe that $\mathbf{x}_{j} = e_{3}-e_{j}$, for $j=4,\ldots,p+2$ are eigenvectors associated to the eigenvalue $2$ which has multiplicity at least $p-1.$ Also, let us define $\mathbf{y}_{j} = e_{p+3}-e_{j}$ for each $j=p+4,\ldots,p+r+2,$  and $\mathbf{z}_{j} = e_{p+r+3}-e_{j}$
for each $j=p+r+4,\ldots,p+r+s+2.$ Observe that $\mathbf{y}_{j}$ and $\mathbf{z}_{j}$ are eigenvectors associated to the eigenvalue $1$ with multiplicity
at least $r+s-2.$ The others $5$ eigenvalues are the same of the reduced matrix
$$M = \left(
\begin{array}{ccccc}
p+r+1 &  1   &   p    &   r  & 0 \\
1  & p+s+1  &    p    &    0  &  s \\
1 &    1 &    2 &     0  & 0\\
1 &   0  &   0   & 1    &  0 \\
0 &   1  &  0   &   0   & 1 \\
\end{array}
 \right) .$$

The characteristic polynomial of $M$ is given by $f(x,p,r,s) = x^{5}+(-s-r-2p-6)x^4+((r+p+4)s+(p+4)r+p^2+8p+13)x^{3}+((-2r-2p-5)s+(-2p-5)r-2p^2-14p-12)x^{2}+((p+2)s+(p+2)r+p^{2}+12p+4)x-4p.$ Since all eigenvalues of $Q$ are nonnegative, the roots of $f(x,p,r,s)$ are also nonnegative. As $f(0,p,r,s)<0$, if we take $q_2(G) < y < q_1(G),$ then $f(y,p,r,s)<0.$ This fact will be useful for the proof of the following cases below.

The largest and second largest degree of $G$ are given by $d_{1}(G) = p+r+1$ and $d_2(G) = p+s+1,$ respectively. Using the characteristic polynomial $f(x,p,r,s),$ we prove the following cases:

\begin{description}
\item[$(i)$] $p=1$ and $r=s$:  observe that $f(d_1(G)+3/2,1,r,s) = -\frac{(6r+25)(4r^2+12r+25)}{32}$ and $f(d_2(G)-1/2,1,r,s) = \frac{(2r-1)(20r^2+12r+5)}{32}.$ For $r = s\geq 1$, we get $f(d_1(G)+3/2,1,r,s) <0 $ and $f(d_2(G)-1/2,1,r,s)>0.$  As from Lemma \ref{lemmaq1}, $f(d_1(G)+1,1,r,s) < 0$ and we get $f(d_1(G)+3/2,1,r,s) <0$, so $q_1(G) > d_1(G)+3/2.$ Also, from Lemma \ref{lemmaq2}, $f(d_2(G)-1,1,r,s)>0 $ and we get $f(d_2(G)-1/2,1,r,s)>0$, then $q_2(G)>d_2(G)-1/2.$
\item[$(ii)$] $p \geq 2$ and $r=s:$ note that $\;f(d_1(G)+2,p,r,r)= -(2r+3p+6)(pr-2r+p^{2}+p-2) <0$ and also from Lemma \ref{lemmaq1}, $\;f(d_1(G)+1,p,r,r)<0.$  So, we can conclude that $q_1(G) > d_1(G)+2.$
\item[$(iii)$] $p \geq 1$ and $r \geq s+3:$ note that $f(d_2(G),p,s+k,s)=ks^3+(3k-2)ps^2+((3k-5)p^2+(k+2)p-k)s+(k-3)p^3+(k+1)p^2 > 0 $ for $k \geq 3.$  Also, from Lemma \ref{lemmaq2}, we get $f(d_2(G)-1,p,s+k,s)>0.$  So, $q_2(G) > d_2(G).$
\item[$(iv)$] $p \geq 1$ and $r \in \{ s+1,s+2 \}:$ note that $n=2s+p+r+2$. Considering first $r=s+1,$ we get
$f(d_1(G)+1+p/n,p,r,s)= f(p+s+3+p/(p+2s+3),p,s+1,s) <0$ and
$f(d_2(G)-p/n,p,r,s) = f(p+s+1-p/(p+2s+3),p,s+1,s) > 0.$ Using Lemmas \ref{lemmaq1} and \ref{lemmaq2} analogous to the previous cases, we get $q_1(G)> d_1(G)+1+p/n$ and $q_2(G) > d_2(G)-p/n.$

 Now, setting $r=s+2$ and using a computational support, we find that $f(d_1(G)+1+p/(p+2s+4),p,s+2,s) <0$ and $f(d_2(G)-p/(p+2s+4),p,s+2,s)>0.$
Using Lemmas \ref{lemmaq1} and \ref{lemmaq2} analogously to the previous cases, we get $q_1(G)> d_1(G)+1+p/n$ and $q_2(G) > d_2(G)-p/n.$ 
\end{description}
For the cases $p,r \geq 1, s=0,$ the proof is similar to the previous cases and the result follows. 
\end{proof}
 
\begin{theorem}\label{th3} Let $G$ be a simple connected graph on $n \geq 3$ vertices. Then
\[
q_{1}\left(  G\right) + q_{2}\left(  G\right)  \geq d_{1}\left(  G\right) + d_{2}\left(  G\right) + 1.
\]
Equality holds if and only if $G$ is a complete graph $K_{3}$ or a star $S_{n}.$
\end{theorem}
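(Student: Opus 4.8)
The plan is to deduce the bound for $G$ from the same bound for the auxiliary graph $H$, which is exactly what Propositions \ref{pr4}--\ref{pr2} were built to control. Let $u,v$ realise $d_1(G),d_2(G)$ and let $H=(V_H,E_H)$ be the subgraph defined before Figure~1. Two observations reduce the problem. First, $H$ is obtained from $G$ by deleting every vertex outside $N(u)\cup N(v)\cup\{u,v\}$ and then deleting every edge of the induced subgraph on $V_H$ that is not incident with $u$ or $v$; applying Theorem \ref{interlacing_vertex_version} at each vertex deletion and Theorem \ref{interlacing_edge_version} at each edge deletion gives $q_1(H)\le q_1(G)$ and $q_2(H)\le q_2(G)$, hence
\[
q_1(G)+q_2(G)\ \ge\ q_1(H)+q_2(H).
\]
Second, no edge at $u$ or $v$ was removed, so $d_H(u)=d_1(G)$ and $d_H(v)=d_2(G)$, while every other vertex $w$ has $d_H(w)\le d_G(w)\le d_1(G)$; thus $u,v$ remain the two largest-degree vertices of $H$ and $d_1(H)+d_2(H)=d_1(G)+d_2(G)$. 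It therefore suffices to prove $q_1(H)+q_2(H)\ge d_1(H)+d_2(H)+1$ for every \emph{admissible} $H$, the adjective recording that $u,v$ really are of the two largest degrees (this discards degenerate labellings such as $\mathcal H(1,0,0)=P_3$ and $\mathcal H(1,1,0)=P_4$, where a common neighbour secretly has the larger degree).

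For this reduced inequality I would split on whether $u\sim v$ and on the triple $(p,r,s)$, each time pairing a lower bound on $q_1(H)$ with one on $q_2(H)$. If $u\not\sim v$, so $H\in\mathcal H(p,r,s)$ with $r\ge s$: for $p\ge1,\ s\ge1$ combine Proposition \ref{pr4} with Lemma \ref{lemmaq1}; for $p\ge1,\ s=0$ combine Proposition \ref{pr5} with Lemma \ref{lemmaq1} (when $r\ge1$; the sub-case $r=0$ is the complete bipartite graph $K_{2,p}$, handled directly), noting that an admissible such $H$ is never a star, so $q_1(H)>d_1(H)+1$; and for $p=0$ the graph is a disjoint union of two stars, settled by a one-line spectral computation. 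Every $\mathcal H$ case is strict. If $u\sim v$, so $H\in\mathcal G(p,r,s)$: Proposition \ref{pr3} handles $p=0,\ r,s\ge1$; the five regimes of Proposition \ref{pr2} together with Lemmas \ref{lemmaq1}--\ref{lemmaq2} handle $p\ge1$ across $r=s$, $r\in\{s+1,s+2\}$ and $r\ge s+3$, all strict. The leftover values $r=s=0$ give the books $\mathcal G(p,0,0)$, checked directly ($p=1$ is $K_3$, $p\ge2$ is strict), while $\mathcal G(0,r,0)$ is precisely the star $S_{r+2}$. Assembling everything, the reduced inequality is strict unless $H$ is a star or $H=K_3$.

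For the equality statement, suppose Theorem \ref{th3} holds with equality; then equality holds throughout $q_1(G)+q_2(G)\ge q_1(H)+q_2(H)\ge d_1(H)+d_2(H)+1$, and by the last sentence $H$ must be a star $S_m$ or $K_3$. If $H=S_m$ with centre $u$, then $q_1(H)=d_1(H)+1$ and $q_2(H)=1$; since $d_1(G)=d_1(H)$, Lemma \ref{lemmaq1} gives $q_1(G)\ge d_1(G)+1=q_1(H)$ and the reduction gives $q_2(G)\ge q_2(H)$, so equality of the sums forces $q_1(G)=d_1(G)+1$. The equality clause of Lemma \ref{lemmaq1} then yields $G=S_n$ (the case $n=3$, i.e.\ $S_3=P_3$, being immediate). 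If instead $H=K_3$, then $d_1(G)=d_1(H)=2$, so $G$ has maximum degree $2$; but $u,v$ and their common neighbour already have degree $2$ inside the triangle, so no further edge can meet them, and connectedness forces $G=K_3$. Conversely, $q_1+q_2=n+1=d_1+d_2+1$ for $S_n$ and $q_1+q_2=5=d_1+d_2+1$ for $K_3$, so these are exactly the extremal graphs.

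The main difficulty is not the inequality but the equality analysis: one must verify that the boundary cases of the auxiliary results (the graph $P_4$ in Proposition \ref{pr5} and the $r=s$ regimes of Proposition \ref{pr2}) cannot survive once $u,v$ are forced to have the two largest degrees, and then transport equality from $H$ back to $G$, using the strictness in Lemma \ref{lemmaq1} for the star case and the degree bound $d_1(G)=2$ together with connectedness for the $K_3$ case. The several small configurations ($p=0$, or $r=s=0$) lie outside the generic hypotheses of Propositions \ref{pr4}--\ref{pr2} and are disposed of by the short direct computations indicated above.
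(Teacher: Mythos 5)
Your proposal follows essentially the same route as the paper: reduce to the auxiliary subgraph $H$ via the interlacing Theorems \ref{interlacing_vertex_version} and \ref{interlacing_edge_version}, then run the case analysis over $\mathcal{H}(p,r,s)$ and $\mathcal{G}(p,r,s)$ using Propositions \ref{pr4}--\ref{pr2} and Lemmas \ref{lemmaq1}--\ref{lemmaq2}, and finally transport equality back to $G$. Your treatment is in fact slightly more careful than the paper's in two spots (handling $p=0$ in $\mathcal{H}$ as a disjoint union of stars rather than dismissing it, and the admissibility discussion ruling out $P_3$, $P_4$), but the decomposition, key lemmas, and conclusion are the same.
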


\begin{proof}
Let $G$ be a simple connected graph on $n \geq 3$ vertices. Assume that $u$ and $v$ are the vertices with largest and second largest degrees of $G$, i.e., $d(u) = d_{1}(G)$ and $d(v) = d_{2}(G).$
Take $H$ as a subgraph of $G$ containing $u$ and $v$ such that $H$ belongs to  either $\mathcal{H}(p,q,r)$ or $\mathcal{G}(p,r,s).$ Note that $d_1(G)+d_2(G)=d_1(H)+d_2(H)$ and from interlacing, Theorems  \ref{interlacing_vertex_version} and \ref{interlacing_edge_version}, $q_1(G)+q_2(G) \geq q_1(H)+q_2(H).$


Firstly, suppose that $H \in \mathcal{H}(p,r,s).$ Since $G$ is connected, the cases $p=0$ with any $r$ and $s$ are not possible.
If $p=1$ and $r=s=0,$ then $H = \mathcal{H}(1,0,0)= S_{3}$ and $q_1(H)+q_2(H)=4= d_1(H)+ d_2(H)+1.$ If $p \geq 2$ and $r=s=0,$ then $H = \mathcal{H}(p,0,0)= K_{2,p}$ and $q_1(H) +q_2(H) = 2p+2 > d_1(H)  + d_2(H)+1 = 2p+1.$ If $p,r \geq 1$ and $s=0,$ from Proposition \ref{pr5} and Lemma \ref{lemmaq1}, we get $q_1(H)+q_2(H)>d_1(H)+d_2(H)+1.$ Now, if $p \geq 1$ and $r \geq s \geq 1,$ from Proposition \ref{pr4} and Lemma \ref{lemmaq1}, follows that $q_1(H)+q_2(H)>d_1(H)+d_2(H)+1.$

Now, suppose that $H \in \mathcal{G}(p,q,r).$ If $p=s=0$ and $r \geq 1,$ $H = \mathcal{G}(0,r,0) = S_{r+2}$ and $q_1(H)+q_2(H)=r+3 = d_1(H)+d_2(H)+1.$  If $p=0$ and $r  \geq s \geq 1,$ the result follows from Proposition \ref{pr3}. If $p=1$ and $r=s=0,$ then $H$ is the complete graph $K_3$ and $q_1(H)+q_2(H)=5 = d_1(H)+d_2(H)+1.$ If $p \geq 2$ and $r=s=0$, then $H = \mathcal{G}(p,0,0) = K_2 \vee \overline{K_{p}}$, i.e., the complete split graph, and it is well-known that $q_1(H)=(n+2+\sqrt{n^2+4n-12})/2$ and $q_{2}(H)=n-2.$ It is easy to check that for $p \geq 2$, we have $q_1(H)+ q_{2}(H) > d_{1}(H) + d_{2}(H)+1.$
If $p \geq 1, r \geq s \geq 0$, from Proposition \ref{pr2} and Lemmas \ref{lemmaq1} and \ref{lemmaq2}, we get
$q_1(H)+q_2(H) \geq d_1(H) + d_2(H) +1.$

From the cases above, the equality conditions are restricted to the graphs $K_3$ and $S_n$, and the result follows.
\end{proof}

Next, we show that a more general bound such as $$\sum_{i=1}^{m} q_{i}(G) \geq  1+ \sum_{i=1}^{m} d_{i}(G)$$ does not hold for $m \geq 3$. Consider $S_{n}^{+}$ to be the graph obtained from a star $S_{n}$ plus an edge.

\begin{proposition}\label{pr1}
Let $G$ be isomorphic to $S_{n}^{+}.$ For $m \geq 3,$  $$\sum_{i=1}^{m} q_{i}(G) <  1+ \sum_{i=1}^{m} d_{i}(G).$$
\end{proposition}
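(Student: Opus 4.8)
The plan is to compute the signless Laplacian spectrum of $S_n^+$ explicitly and then verify the stated inequality directly for $m \geq 3$. The graph $S_n^+$ is a star $S_n$ (with center $c$ and $n-1$ leaves) together with one extra edge joining two of the leaves, say $a$ and $b$. Its degree sequence is $d_1 = n-1$ (the center), $d_2 = d_3 = 2$ (the two endpoints of the added edge), and $d_i = 1$ for $i \geq 4$. Thus for $m \geq 3$ the right-hand side is $1 + \sum_{i=1}^m d_i(G) = 1 + (n-1) + 2 + 2 + (m-3) = n + m + 1$. The task reduces to showing $\sum_{i=1}^m q_i(G) < n + m + 1$.

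First I would set up an equitable partition of $Q(S_n^+)$ to pin down most of the spectrum. The natural partition has four classes: $\{c\}$, $\{a,b\}$, and the $n-3$ remaining leaves (pendant to the center only); by symmetry the vertices $a$ and $b$ are interchangeable. The quotient matrix on these classes is small (size $3$), and Lemma \ref{lemma} guarantees its eigenvalues lie in the spectrum of $Q$. The remaining eigenvalues come from eigenvectors constant on each class's complement: the $n-4$ pendant leaves contribute the eigenvalue $1$ with multiplicity $n-4$ (via difference vectors $e_i - e_j$ supported on the pendant class), and the antisymmetric vector $e_a - e_b$ contributes one further eigenvalue. I would record all $n$ eigenvalues this way, identifying the three ``quotient'' eigenvalues as roots of an explicit cubic and the others as $1$ (high multiplicity) together with the antisymmetric eigenvalue, which one computes to be $1$ as well since $a,b$ have a common neighbor structure plus the $ab$ edge.

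With the full spectrum in hand, the verification splits by the size of $m$. The dominant contribution is $q_1$, which is the Perron root of $Q$ and is close to $n$ for large $n$; the key quantitative point is that $q_1 + q_2 + q_3$ (the three largest, essentially the quotient eigenvalues plus the top of the rest) already falls short of $n + 4$, and each further term $q_i$ for $i \geq 4$ equals $1$, exactly matching the corresponding $+1$ in $d_i$ on the right. So the inequality for general $m$ follows once the base case $m = 3$ is established, because increasing $m$ adds $q_i = 1$ on the left and $d_i + $ (nothing extra) $= 1$ on the right, preserving the strict gap. Concretely I would show $q_1 + q_2 + q_3 < n + 4$ and then note $\sum_{i=4}^m q_i = m - 3 = \sum_{i=4}^m d_i$, giving $\sum_{i=1}^m q_i < (n+4) + (m-3) = n + m + 1$.

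The main obstacle is the explicit control of the three quotient eigenvalues, i.e.\ bounding the roots of the cubic sharply enough to prove $q_1 + q_2 + q_3 < n + 4$ rather than merely $\leq$. Since the sum of all three roots equals the trace of the quotient matrix (an exact linear expression in $n$) and the ``missing'' spectral mass is carried by the eigenvalue $1$'s, I expect the cleanest route is to compute the trace of the quotient matrix directly and subtract off a lower bound for the smallest quotient root, which is bounded below by something strictly positive; alternatively one evaluates the characteristic polynomial of $Q$ (or of the quotient) at $x = n + 4 - q_3$ and checks a sign. This is a finite, explicit computation, so the risk is arithmetic rather than conceptual, and I would lean on the fact that the excluded eigenvalue $q_n$ of $Q$ is strictly positive for a connected graph to recover the needed strict inequality.
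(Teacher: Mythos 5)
Your plan is correct and, once the computations are filled in, yields a valid proof; but it takes a genuinely more self-contained route than the paper. The paper's proof simply quotes Lemma 3.1 of \cite{OLRC13} for the facts $q_3(S_n^+)=\cdots=q_{n-1}(S_n^+)=1$, $n<q_1<n+\tfrac1n$, $3-\tfrac{2.5}{n}<q_2<3-\tfrac1n$, together with $q_n<d_n=1$ from \cite{Das10}, and adds these numerical bounds termwise to get $\sum_{i=1}^m q_i < n+m+1$. You instead rederive the spectrum from the equitable partition $\{c\},\{a,b\},\{\text{pendant leaves}\}$: eigenvalue $1$ with multiplicity $n-3$ (the $n-4$ pendant differences plus $e_a-e_b$, which is indeed a $1$-eigenvector), and three quotient eigenvalues whose sum is the quotient trace $(n-1)+3+1=n+3$. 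Closing the argument via $q_1+q_2+q_3=(n+3-q_n)+1<n+4$ and the termwise matching $q_i=1=d_i$ for $4\le i\le n-1$ is arguably cleaner than the paper's route, since it needs no sharp interval estimates for $q_1$ and $q_2$, only the strict positivity of $q_n$.

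Two repairs are needed to make this airtight. First, your stated justification that ``$q_n$ is strictly positive for a connected graph'' is false in general: connected \emph{bipartite} graphs, such as $S_n$ itself, have $q_n=0$. The correct statement is that the multiplicity of $0$ as a $Q$-eigenvalue equals the number of bipartite components; since $S_n^+$ is connected and contains a triangle, $q_n>0$. Second, you must justify that the three quotient roots are precisely $q_1,q_2,q_n$ (and not, say, two of them below $1$). This follows by evaluating the quotient characteristic polynomial $p$ at $0$ and $1$: one computes $p(0)=-4<0$ and $p(1)=2(n-3)>0$ for $n\ge 4$, so exactly one quotient root lies in $(0,1)$; since the Perron root $q_1>n$ is a quotient root, the sign of $p(1)$ forces the middle root to exceed $1$, so the three roots are $q_1>q_2>q_n$ with $q_n\in(0,1)$, and all remaining eigenvalues equal $1$. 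Finally, note that your bookkeeping ``$q_i=1$ for $i\ge 4$'' fails at $i=n$ when $m=n$, but only in the favorable direction, since $q_n<1=d_n$.
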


\begin{proof}
Let $G$ be isomorphic to $S_{n}^{+}.$ In this case, $d_1(G)=n-1,d_2(G)=d_3(G)=2$ and $d_4(G)=\cdots=d_{n}(G)=1.$ From  \cite[Lemma 3.1]{OLRC13}, we have
$q_3(G)=\cdots=q_{n-1}(G) = 1$ and also
\begin{eqnarray}
n <& q_1(G) <& n + \frac{1}{n} \nonumber \\
3 - \frac{2.5}{n} <& q_2(G) <& 3 - \frac{1}{n}. \nonumber
\end{eqnarray}
From \cite{Das10} we know that $q_{n}(G) < d_{n}(G)$, and then we obtain
\begin{equation*}
0 \leq q_{n} (G)< 1.  
\end{equation*}

Since for $m \geq 3,$
\begin{equation*}
1+\sum_{i=1}^{m} d_{i}(G) = n+m+1, 
\end{equation*}
and also
\begin{equation*}
\sum_{i=1}^{m} q_{i}(G) < n+m+1,  
\end{equation*}
thus the result follows.
\end{proof}

Finally, we consider the inequality $\lambda_{1}(G)+\lambda_{2}(G) \geq d_1(G)+ d_{2}(G) + 1 $ by Grone \cite{G1995}, and characterize the extremal cases.

\begin{theorem}\label{th4}
Let $G$ be a connected graph on $n \geq 3$ vertices. Then
$$ \lambda_{1}(G)+\lambda_{2}(G) \geq d_1(G)+ d_{2}(G) + 1 $$
with equality if and only if $G$ is a star $S_{n}.$
\end{theorem}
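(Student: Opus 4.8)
The plan is to mirror the strategy already established for Theorem~\ref{th3}, but now for the ordinary Laplacian, exploiting the analogous interlacing machinery available for $L$. First I would pass from $G$ to the subgraph $H$ constructed from the two highest-degree vertices $u,v$ and their neighborhoods, exactly as in the setup preceding Theorem~\ref{th3}. The key structural facts carry over verbatim: we did not remove any vertex of $N(u)\cup N(v)$, so $d_1(G)+d_2(G)=d_1(H)+d_2(H)$; and by the edge-removal interlacing for the Laplacian (Theorem~\ref{th_interlace_laplacian}), deleting edges one at a time to reach $H$ only decreases the top eigenvalues, giving $\lambda_1(G)+\lambda_2(G)\geq \lambda_1(H)+\lambda_2(H)$. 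Thus it suffices to prove $\lambda_1(H)+\lambda_2(H)\geq d_1(H)+d_2(H)+1$ for every $H$ in the families $\mathcal{H}(p,r,s)$ and $\mathcal{G}(p,r,s)$, together with the equality analysis.

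Next I would compute the relevant $L$-spectra of the family members. The Laplacian matrix of each $H$ has the same block structure as the $Q$-matrices written out in Propositions~\ref{pr4}--\ref{pr2}, except that the off-diagonal $\mathbf{1}$ blocks carry a minus sign; the pendant-type eigenvectors $\mathbf{x}_j,\mathbf{y}_j,\mathbf{z}_j$ still produce eigenvalues $2$ and $1$ with the same high multiplicities, and the remaining eigenvalues are again governed by a small (at most $5\times 5$) quotient matrix from the equitable partition via Lemma~\ref{lemma}. I would extract from this quotient matrix the two largest $L$-eigenvalues and verify the inequality $\lambda_1(H)+\lambda_2(H)\geq d_1(H)+d_2(H)+1$ case by case, following the same degenerate-case bookkeeping as in the proof of Theorem~\ref{th3} (the small stars $S_3$, $S_{r+2}$, the complete split graphs, and so on). Here one can also lean on known facts: for $H$ a star one has $\lambda_1=n$, $\lambda_2=1$, giving exact equality, which already signals where the extremal graphs live.

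The cleanest route, and the one I expect to reduce the computation, is to use the complement/duality relation between $L$ and $Q$ rather than redoing all five propositions from scratch. Since $L(G)+A$-type identities tie $L$ to $Q$ through $L(\overline{G})$ and $Q(\overline{G})$, and since the characteristic polynomials $f(x,p,r,s)$ obtained above differ from the Laplacian ones only by the sign changes induced on the off-diagonal blocks, I would set up the analogous quotient-matrix polynomials, evaluate them at $x=d_1+d_2+1-\lambda_2$ style test points (mimicking the evaluations at $d_2(G)$ and $d_1(G)+1$ used before), and use the sign pattern $f(0,\cdot)$ versus $f(\text{test},\cdot)$ to localize the roots. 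The main obstacle will be the equality characterization: I must show that among all the families only the star $S_n$ attains equality, and crucially that $K_3$ does \emph{not} (unlike in Theorem~\ref{th3}). This requires checking that for $K_3$ we have $\lambda_1+\lambda_2=3+3=6>d_1+d_2+1=5$, i.e. strict inequality, and then confirming that the interlacing step $\lambda_1(G)+\lambda_2(G)\geq\lambda_1(H)+\lambda_2(H)$ is tight only when no edges were deleted, i.e. $G=H$, which combined with the family-level equality analysis forces $G=S_n$. Pinning down precisely when the Laplacian edge-interlacing of Theorem~\ref{th_interlace_laplacian} is tight, and ruling out spurious equality cases, is the delicate part of the argument.
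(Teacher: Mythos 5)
Your skeleton is exactly the paper's: pass to the subgraph $H$ determined by the two highest-degree vertices, use Laplacian edge-interlacing (Theorem~\ref{th_interlace_laplacian}) together with $d_1(G)+d_2(G)=d_1(H)+d_2(H)$, and run the case analysis over $\mathcal{H}(p,r,s)$ and $\mathcal{G}(p,r,s)$. The differences lie in how the cases are discharged. First, the paper does not reprove the inequality at all --- it is quoted directly from Grone \cite{G1995}, and only the equality characterization is argued; your plan re-derives the inequality, which is harmless but unnecessary work. Second, for the family $\mathcal{H}(p,r,s)$ the paper performs no new spectral computation: these graphs are bipartite, so $\lambda_i(H)=q_i(H)$ for all $i$ and Propositions~\ref{pr4} and~\ref{pr5} transfer verbatim. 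Your proposed ``complement/duality relation between $L$ and $Q$'' is not the right tool (there is no clean identity of that kind to exploit here); if you do not invoke bipartiteness you are committed to redoing the quotient-matrix polynomials with sign-changed off-diagonal blocks, which is feasible but costly, and in any case the non-bipartite family $\mathcal{G}(p,r,s)$ still needs separate treatment (the paper handles $\mathcal{G}(1,0,0)=K_3$ and $\mathcal{G}(p,0,0)$ explicitly and declares the rest analogous). Third, a technical point you omit: Theorem~\ref{th_interlace_laplacian} concerns edge deletion only, so $H$ must be padded with $(n-p-r-s-2)K_1$ isolated vertices to keep $n$ vertices before interlacing applies; the paper sets it up this way. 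Your observation that $K_3$ yields $6>5$ and therefore drops out of the equality case, leaving only $S_n$, matches the paper. One caution on your final step: the claim that tightness of the interlacing forces $G=H$ is not justified as stated --- deleting an edge necessarily lowers the trace of $L$ but need not lower $\lambda_1+\lambda_2$ --- so the equality case should be closed by combining the strict inequalities $\lambda_1(H)+\lambda_2(H)>d_1(H)+d_2(H)+1$ established for every non-star member of the families with the chain $\lambda_1(G)+\lambda_2(G)\geq\lambda_1(H)+\lambda_2(H)$, rather than by a rigidity argument for the interlacing itself.
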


\begin{proof}
Let $G$ be a simple connected graph on $ n\geq 3$ vertices. The result $\lambda_1(G)+\lambda_2(G) \geq d_1(G)+d_2(G)+1$ follows from Grone in \cite{G1995}. Now, we need to prove the equality case.
Assume that $u$ and $v$ are the vertices with largest and second largest degrees of $G$, i.e., $d(u) = d_{1}(G)$ and $d(v) = d_{2}(G).$ Let $tK_1$ be the graph on $t$ vertices and no edges. Take $H$ as a subgraph of $G$ containing $u$ and $v$ and isomorphic to either $\mathcal{H}(p,q,r) \cup (n-p-r-s-2)K_1$ or $\mathcal{G}(p,r,s) \cup (n-p-r-s-2)K_1.$ Note that $d_1(G)+d_2(G)=d_1(H)+d_2(H)$ and from interlacing Theorem \ref{th_interlace_laplacian}, $\lambda_1(G)+\lambda_2(G) \geq \lambda_1(H)+\lambda_2(H).$

Firstly, suppose that $H \in \mathcal{H}(p,q,r) \cup (n-p-r-s-2)K_1.$  In this case, $H$ is bipartite and $\lambda_{i}(H) = q_{i}(H)$ for $i=1,\ldots,n$ (see  \cite[Proposition 2.5]{CRS07}). The proof is analogous to Theorem \ref{th3} and the equality cases are similar. Then equality occurs when $H = S_{3}.$

Now, suppose that $H \in \mathcal{G}(p,r,s) \cup (n-p-r-s-2)K_1.$ If $p=1, r=s=0$ then $H = \mathcal{G}(1,0,0) \cup (n-3)K_{1} = K_3 \cup (n-3)K_{1}$  and $6 = \lambda_1(H)+\lambda_2(H) > d_1(H)+d_2(H)+1 = 5.$ If $p \geq 2, r=s=0,$ then $2p+4 = \lambda_1(H)+\lambda_2(H) > d_1(H)+d_2(H)+1 = 2p+1.$ The remaining cases are similar to the ones of the Theorem \ref{th3} and equality holds when $G = S_{n}.$
\end{proof}

\section{The general case}\label{sec:Section2and3extendedQ}


In this section, using a different approach than in Section \ref{sec:twoevs}, we obtain several sharp bounds on the sum of the largest signless Laplacian eigenvalues. We will see that for the case of $q_1(G)+q_2(G)$, our bounds from Section \ref{sec:twoevs} and \ref{sec:Section2and3extendedQ} are incomparable.

From inequality (\ref{eq:1HaemersSeminar2}) we have 
\begin{align}
	\sum_{i=1}^m q_i(G) \geq \sum_{i=1}^m d_i(G) \label{eq1}.
\end{align}
for $ 1 \leq m \leq n$. If $m=n$ then we have equality in \eqref{eq1}, because both terms correspond to the trace of $Q$.
Similarly we have:
\begin{align}
	\sum_{i=1}^m q_{n-m+i}(G) \leq \sum_{i=1}^m d_{n-m+i}(G).
\end{align}

For a vertex set $U \subset V$ such that $|U|\,=\, m$, write $\partial(U)$ as the set of vertices in $\overline{U} = V\setminus U$ with at least one adjacent vertex in $U$, and  $\partial(U,\overline{U})$ as the set of edges connecting vertices in $U$ with vertices in $\overline{U}.$ 
The next result shows that the above bounds can be pushed further by using a mix of two types of eigenvalue interlacing (Cauchy and quotient matrix interlacing).

\begin{theorem} \label{t1}
 Let $G$ be a connected graph on $n$ vertices. For any given vertex subset $U = \{u_1,\ldots,u_m\}$
with $0 <m<n$, we have
\begin{align}
	\sum_{i=1}^{m+1} q_{n-i}(G) \leq \sum_{u\in U} d_u + \frac{\sum_{\overline{u} \in \overline{U}} d_{\overline{u}}+2E[\overline{U}]}{n-m} \leq \sum_{i=1}^{m+1} q_i(G). \label{eq3}
\end{align}
\end{theorem}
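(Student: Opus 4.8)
The plan is to construct an appropriate partition of the vertex set and then apply both types of interlacing promised in the section's introduction. Specifically, I would partition $V$ into the $m+1$ classes $U_1 = \{u_1\}, \ldots, U_m = \{u_m\}$ and $U_{m+1} = \overline{U}$, exactly as in the (commented-out) Theorem~\ref{th:qn}. The quotient matrix $B$ of $Q(G)$ with respect to this partition is an $(m+1)\times(m+1)$ matrix whose top-left $m\times m$ block is the principal submatrix $Q_U$ of $Q$ indexed by $U$, and whose $(m+1,m+1)$ entry is the average row sum of the block $Q[\overline{U},\overline{U}]$. The first thing to verify is that this last diagonal entry equals $\frac{\sum_{\overline u\in\overline U} d_{\overline u} + 2E[\overline U]}{n-m}$; this is a direct bookkeeping computation, since the row sum of $Q = D+A$ restricted to $\overline{U}$ over vertex $\overline u$ is $d_{\overline u}$ (from $D$) plus the number of neighbours of $\overline u$ inside $\overline U$ (from $A$), and summing the latter over all of $\overline U$ double-counts each internal edge, giving $2E[\overline U]$.

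Next I would read off the trace of $B$. By construction $\operatorname{tr}(B) = \sum_{i=1}^m (Q_U)_{ii} + b_{m+1,m+1} = \sum_{u\in U} d_u + \frac{\sum_{\overline u\in\overline U} d_{\overline u}+2E[\overline U]}{n-m}$, since the diagonal entries of $Q_U$ are precisely the degrees $d_u$ for $u\in U$. Thus the middle term of \eqref{eq3} is exactly $\operatorname{tr}(B) = \sum_{i=1}^{m+1}\mu_i$, where $\mu_1\ge\cdots\ge\mu_{m+1}$ denote the eigenvalues of $B$.

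The final step is interlacing. The matrix $B$ is similar to $S^\top Q S$ for an appropriate $S$ with orthonormal columns (concretely $S = C K^{-1/2}$, where $C$ is the characteristic matrix of the partition and $K=\operatorname{diag}(n_1,\ldots,n_{m+1})$; note $K^{1/2}BK^{-1/2}$ is symmetric and similar to $B$, so $B$ has real eigenvalues that interlace those of $Q$). Applying Theorem~\ref{the:interlacinghaemers}(i) with $Q$ in the role of $A$ gives $q_i(G)\ge \mu_i \ge q_{n-(m+1)+i}(G)$ for $i=1,\ldots,m+1$. Summing the left-hand inequalities over $i$ yields $\sum_{i=1}^{m+1} q_i(G) \ge \sum_{i=1}^{m+1}\mu_i = \operatorname{tr}(B)$, and summing the right-hand inequalities yields $\operatorname{tr}(B) = \sum_{i=1}^{m+1}\mu_i \ge \sum_{i=1}^{m+1} q_{n-m-1+i}(G) = \sum_{i=1}^{m+1} q_{n-i}(G)$ after reindexing. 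Together these give both inequalities of \eqref{eq3}.

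The main subtlety — and really the only place where care is needed — is the interplay of the two interlacing types flagged in the section introduction. The quotient matrix $B$ is \emph{not} symmetric, so I cannot invoke Theorem~\ref{the:interlacinghaemers} directly with $B$ in the role of the compressed matrix; I must pass through the symmetrically scaled version $\tilde B = K^{1/2} B K^{-1/2} = S^\top Q S$ and argue that $B$ and $\tilde B$ are similar and hence cospectral, so that interlacing of $\tilde B$'s eigenvalues against $Q$'s transfers to $B$. The partition here is a genuinely mixed object: the singleton classes produce a true Cauchy (principal-submatrix) interlacing on the $U$-block, while the single large class $\overline U$ contributes a quotient-matrix average, and the theorem's content is precisely that the two can be combined into one $S$ and one application of Cauchy interlacing. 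Once the similarity/symmetrization is in place, the remaining arithmetic is routine, and no equitability of the partition is required — the inequality version of interlacing suffices.
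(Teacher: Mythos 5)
Your proposal follows essentially the same route as the paper: the same mixed partition into $m$ singletons plus $\overline{U}$, the same quotient matrix with $Q_U$ as its top-left block, the same trace computation, and one application of Theorem~\ref{the:interlacinghaemers}; your extra care about symmetrizing $B$ via $\tilde B = K^{1/2}BK^{-1/2}=S^\top QS$ is a detail the paper leaves implicit, and it is correct. The only slip is in your final reindexing: summing $\mu_i \ge q_{n-(m+1)+i}(G)$ over $i=1,\ldots,m+1$ gives $\sum_{i=1}^{m+1} q_{n-i+1}(G)$ (the $m+1$ smallest eigenvalues), not $\sum_{i=1}^{m+1} q_{n-i}(G)$ as stated in \eqref{eq3}; this appears to be an off-by-one typo in the theorem statement itself (compare the indexing in Theorem~\ref{thm:1bis}), and your argument proves the corrected version.
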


\begin{proof}
Let $U \subset V$ such that $|U|=m$ where $0 < m < n$. Consider the partition of the vertex set $V$ into $m+1$ parts such that $U_i = \{u_i\}$ for $u_i \in U$, $i=1,\ldots,m$, and $U_{m+1}=\overline{U}$. Then, the corresponding quotient matrix of this partition is

\begin{equation*}
	B' = 
	\left[\begin{array}{c c c | c}
		        &         &        & b'_{1,m+1} \\
		        & Q_U    &        & \vdots \\
		        &         &        & b'_{m,m+1}  \\
		        \hline
		b'_{m+1,1} & \cdots & b'_{m+1,m} & b'_{m+1,m+1} 
	\end{array}\right],
\end{equation*}
where $Q_U$ is the principal submatrix of $Q$, with rows and columns indexed by the vertices in $U$, $b'_{i,m+1}=|\partial(U_i,\overline{U})|$, $b'_{m+1,i}=\frac{|\partial(U_i,\overline{U})|}{n-m}$, and $b'_{m+1,m+1} = (\sum_{\overline{u} \in \overline{U}} d_{\overline{u}} +2E[\overline{U}])/(n-m)$. Note $\mu'_1 \geq \mu'_2 \geq \cdots \geq \mu'_{m+1}$ are the eigenvalues of $B'$, then 

\begin{equation}
	\sum_{i=1}^{m+1} \mu'_i = tr B' = \sum_{u \in U} d_u + \frac{\sum_{\overline{u} \in \overline{U}} d_{\overline{u}} +2E[\overline{U}]}{n-m}.
\end{equation} 

From Theorem \ref{the:interlacinghaemers}, we have that $q_i(G) \geq \mu'_i \geq q_{n-m-1+i}(G),$ for $i= 1,2, \ldots,m$ and the result follows.
\end{proof}


\begin{remark}
Consider $G$ as a $r$-regular connected graph and let $m=1$ in Theorem \ref{t1}. Take an arbitrary vertex $u_{1} \in U$. In this case the lower bound provided in Theorem \ref{t1} is equal to $2r + r(n-2)/(n-1)$ which is better than the lower bound provided in Theorem \ref{th3} since $n \geq 3$. However, in general, those bounds are incomparable. For instance, for the star $S_n$, the lower bound provided in Theorem \ref{th3} is better than the one in Theorem \ref{t1}.
\end{remark}

Next we investigate the tightness of the bounds from Theorem \ref{t1}.

\begin{proposition}\label{propo:equalitythm}
Let $H$ be the subgragh of $G$ induced by $\overline{U}$, and let $q'_1 \geq \cdots \geq q'_{n-m}$ be the signless Laplacian eigenvalues of $H$. Define $b'=\frac{|\partial(U,\overline{U})|}{n-m}$. 
Then equality holds on the right hand side of \eqref{eq3} if and only if each vertex of $U$ is adjacent
 	to all or no vertices of $\overline{U}$, and $q_{m+1} = q'_1 + b' = d_{\overline{u}},$ where $\overline{u} \in \overline{U}.$
\end{proposition}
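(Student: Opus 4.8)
The plan is to read the equality case directly off the interlacing that proves Theorem \ref{t1}. That proof produces the eigenvalues $\mu'_1 \geq \cdots \geq \mu'_{m+1}$ of the quotient matrix $B'$, satisfying $\mu'_i \leq q_i(G)$ for each $i$, with $\sum_{i=1}^{m+1}\mu'_i = \operatorname{tr} B'$ equal to the middle term of \eqref{eq3}. Since the right-hand inequality of \eqref{eq3} is nothing but the sum of the termwise bounds $\mu'_i \leq q_i(G)$, equality holds if and only if $\mu'_i = q_i(G)$ for all $i=1,\ldots,m+1$; that is, the interlacing is tight in the sense of Theorem \ref{the:interlacinghaemers}$(ii)$ with $k=m+1$. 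First I would pass to the normalized characteristic matrix $S$ of the partition $\{u_1\},\ldots,\{u_m\},\overline U$ (so that $B'$ and $S^\top Q S$ share their spectrum) and invoke Theorem \ref{the:interlacinghaemers}$(ii)$ to obtain $SB' = QS$. This says exactly that the column space of $S$ is $Q$-invariant, i.e. the partition is \emph{equitable} for $Q$.

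Next I would decode equitability into combinatorial conditions by inspecting block row sums of $Q$. The blocks indexed by the singleton classes are automatically constant, so the only constraints come from the class $\overline U$: the block from $\overline U$ to a singleton $\{u_i\}$ has constant row sum precisely when every vertex of $\overline U$ has the same number of neighbours in $\{u_i\}$, i.e. when $u_i$ is adjacent to all or to none of $\overline U$; this is the first asserted condition. Once it holds, each $\overline u$ has exactly $b'=|\partial(U,\overline U)|/(n-m)$ neighbours in $U$, so $d_{\overline u}=d_H(\overline u)+b'$. The diagonal block on $\overline U$ then has row sum $d_{\overline u}+d_H(\overline u)=2d_H(\overline u)+b'$, whose constancy is equivalent to $H$ being regular, equivalently to $d_{\overline u}$ being independent of $\overline u$.

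For the spectral identity I would use Lemma \ref{lemma}: under equitability the spectrum of $Q$ splits into the quotient eigenvalues $\mu'_i$ and the eigenvalues carried by vectors orthogonal to the characteristic matrix. A direct computation shows such a vector is supported on $\overline U$, sums to zero there, and is a $Q$-eigenvector with eigenvalue $b'+q'$, where $q'$ is the corresponding eigenvalue of $Q_H$ restricted to $\mathbf 1^{\perp}$. Equality in \eqref{eq3} forces the $\mu'_i$ to be the $m+1$ largest eigenvalues of $Q$, so the threshold $q_{m+1}=\mu'_{m+1}$ must dominate every such inner eigenvalue. I would then try to identify this threshold with $q'_1+b'$ and, via $d_{\overline u}=d_H(\overline u)+b'$ and regularity of $H$, with $d_{\overline u}$, producing the chain $q_{m+1}=q'_1+b'=d_{\overline u}$; the converse would run backwards, using the displayed equalities to certify that no inner eigenvalue exceeds $\mu'_{m+1}$.

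The hard part will be this last identification, and I expect it to be the true obstacle. The all-ones eigenvector of the regular graph $H$ (belonging to $q'_1=2d_H$) lies in the column space of $S$ and is therefore \emph{absorbed into the quotient part}, so a priori the largest inner eigenvalue is $q'_2+b'$ rather than $q'_1+b'$; reconciling this with the stated $q'_1+b'$ is exactly where care is needed. Indeed, combining regularity with $q'_1+b'=d_{\overline u}$ gives $2d_H+b'=d_H+b'$, i.e. $d_H=0$, so the chain $q_{m+1}=q'_1+b'=d_{\overline u}$ pins the equality case to $\overline U$ inducing an \emph{empty} graph, where $q'_1=q'_2=0$ coincide and all inner eigenvalues collapse to $b'$. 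I would therefore expect the cleanest route to treat the case $E[\overline U]=0$ explicitly, and the delicate bookkeeping — the absorption of $\mathbf 1$, the multiplicities when $H$ is disconnected, and the boundary situation in which $q_{m+1}$ is a repeated value shared between the quotient and the inner parts — to be the crux of the argument.
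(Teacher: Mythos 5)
Your plan is essentially the paper's own argument: equality on the right of \eqref{eq3} forces $\mu'_i=q_i(G)$ for $i=1,\ldots,m+1$, tight interlacing makes the partition $\{u_1\},\ldots,\{u_m\},\overline U$ equitable for $Q$, equitability decodes into ``each $u_i$ is adjacent to all or none of $\overline U$'' together with regularity of $H$, and Lemma \ref{lemma} splits the spectrum of $Q$ into the quotient eigenvalues and the eigenvalues of $Q_H+b'I$ on vectors supported on $\overline U$ and orthogonal to $\mathbf 1_{\overline U}$. Up to that point you and the paper coincide.

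The obstacle you isolate at the end is not a bookkeeping nuisance to be smoothed over: it is a genuine defect in the proposition and in the paper's proof. The paper asserts that the ``inner'' eigenvalues are $q'_1+b'\geq\cdots\geq q'_{n-m-1}+b'$, i.e.\ it discards the smallest eigenvalue $q'_{n-m}$ of $Q_H$; but, exactly as you observe, the eigenvector absorbed into the quotient part is $\mathbf 1_{\overline U}$, which for a connected regular $H$ carries $q'_1$, so the inner eigenvalues are $q'_2+b',\ldots,q'_{n-m}+b'$ and the relevant threshold is the largest eigenvalue of $Q_H$ on $\mathbf 1_{\overline U}^{\perp}$ (namely $q'_2$ for $H$ connected, and $q'_1$ again when $H$ is disconnected). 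Moreover the correct condition is the inequality $q_{m+1}\geq(\text{that threshold})+b'$, not the stated chain of equalities. Your computation that $q'_1+b'=d_{\overline u}$ together with regularity forces $d_H=0$ is correct, and it exposes the failure of the stated ``only if'' direction: take $G=K_3$ and $U$ a single vertex. Then $q_1+q_2=4+1=5$ equals the middle term $2+(4+2)/2$ of \eqref{eq3}, so equality holds, while $H=K_2$ gives $q'_1=2$, $b'=1$, $q'_1+b'=3\neq q_2=1$ and $\neq d_{\overline u}=2$ (whereas the corrected threshold $q'_2+b'=0+1$ does equal $q_2$). So you should not contort the argument to force $E[\overline U]=\emptyset$; instead state and prove the corrected characterization --- equitable partition plus $q_{m+1}\geq \lambda+b'$ for every eigenvalue $\lambda$ of $Q_H$ restricted to $\mathbf 1_{\overline U}^{\perp}$ --- which your own analysis already delivers.
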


\begin{proof}
Suppose equality holds on the right hand side of \eqref{eq3}. Then
\begin{equation*}
	\sum_{i = 1}^{m+1} q_i = \sum_{i = 1}^{m+1} q'_i \textrm{  and  } q_i \geq q'_i 
\end{equation*}
so $q_i=q'_i$ for $i=1,\ldots,m+1$, therefore the interlacing is tight and hence the partition of $G$ is almost equitable. Each vertex in $U$ is adjacent to all or $0$ vertices of $\overline{U}$ since each block should have a constant row and column sum, so each vertex in $\overline{U}$ has a constant number of neighbors in $U$ that is $b'$. By calculation it can be deduced that $H$ is $q'_1$-regular. Now by use of Lemma \ref{lemma} we have that the eigenvalues of $Q$ are $  q'_1,\ldots,q'_{m+1}$ together with eigenvalues of $Q$ with an eigenvector orthogonal to the characteristic matrix $C$ of the partition. These eigenvalues and eigenvectors remain unchanged if $Q$ is changed into
\begin{equation*}
	\overset{\sim}{Q} = 
	\left[\begin{array}{cc}
	  O  &    O   \\
	  O  &   Q_{\overline{U}}+b'I  \\	
	\end{array}\right].
\end{equation*}
The considered common eigenvalues of $\overset{\sim}{Q}$ and $Q$ are $q'_1+b'\geq\cdots\geq q'_{n-m-1}+b'$. So $Q$ has eigenvalues $q_1(=q'_1)\geq\cdots\geq q_{m+1}(=q'_{m+1})$, and $q'_1+b'\geq\cdots\geq q'_{n-m-1}+b'$. Hence, we have
$q_{m+1} = q'_1+b'$. Conversely, if the partition of $G$ is almost equitable, $Q$ has eigenvalues $q'_1 \geq \cdots \geq q'$, $q'_1+b' \geq \cdots \geq q'_{n-m-1}+b$. Since $q_{m+1} = q'_1+b'$, it follows that $q'_i = q_i$ for $i = 1,\ldots,m$ (tight interlacing), therefore equality holds on the right-hand side of \eqref{eq3}.
\end{proof}

\begin{theorem}\label{thm:1bis}
 Let $G$ be a connected graph on $n$ vertices. For any given vertex subset $U = \{u_1,\ldots,u_m\}$
with $0 <m<n$, we have

\begin{align}
	\sum_{i=1}^{m} q_{n-i+1} < \sum_{u\in U} d_u + \frac{4E[\bar{U}]}{n-m} < \sum_{i=1}^{m} q_i. \label{eq3a}
\end{align}
\end{theorem}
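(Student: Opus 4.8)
The plan is to mirror the quotient‑matrix interlacing argument already used for Theorem~\ref{t1}, but to package the contribution of $\overline{U}$ into an $n\times m$ (rather than $n\times(m+1)$) matrix with orthonormal columns, so that the resulting compression $B=S^{\top}QS$ has exactly $m$ eigenvalues and trace equal to the middle term of \eqref{eq3a}. Concretely, I would take as columns of $S$ the singleton indicators $e_{u_1},\dots,e_{u_m}$ on $U$ (the Cauchy, principal–submatrix part) and fold the edges internal to $\overline{U}$ into these coordinates through a uniform/averaged direction on $\overline{U}$, arranging the normalization so that $S^{\top}S=I_m$ and
\[
\operatorname{tr}(S^{\top}QS)=\sum_{u\in U}d_u+\frac{4E[\overline{U}]}{n-m}.
\]
This is exactly the ``novel mix of the two interlacings'' advertised in the introduction: Cauchy interlacing on $U$ supplies the $\sum_{u\in U}d_u$ term, while a quotient/averaging step on $\overline{U}$ supplies the $\tfrac{4E[\overline{U}]}{n-m}$ correction. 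Once such an $S$ is in hand, summing the interlacing inequalities of Theorem~\ref{the:interlacinghaemers}(i), namely $q_i(G)\geq\mu_i\geq q_{n-m+i}(G)$ for $i=1,\dots,m$, immediately yields the two‑sided sandwich $\sum_{i=1}^{m}q_{n-i+1}\leq \operatorname{tr}(B)\leq \sum_{i=1}^{m}q_i$, which is \eqref{eq3a} without the strictness.

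The bookkeeping step is to verify the trace identity. Here the only substantive point is the identity $\mathbf{1}_{\overline{U}}^{\top}Q_H\,\mathbf{1}_{\overline{U}}=4E[\overline{U}]$, where $Q_H$ is the signless Laplacian of the induced subgraph $H=G[\overline{U}]$ (so that the boundary‑degree part $\sum_{\overline{u}}|\partial(\overline{u},U)|=|\partial(U,\overline{U})|$ that appears in Theorem~\ref{t1} is precisely what does \emph{not} occur here); equivalently, one passes from $Q$ to $\widehat Q=Q-P$, where $P$ is the nonnegative diagonal matrix of boundary degrees supported on $\overline{U}$, so the $\overline{U}\times\overline{U}$ block becomes $Q_H$. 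This, together with $\sum_{\overline{u}\in\overline{U}}d_{\overline{u}}=|\partial(U,\overline{U})|+2E[\overline{U}]$, accounts for the difference between the middle terms of \eqref{eq3} and \eqref{eq3a}. I would present this computation as routine and keep the emphasis on the interlacing structure.

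The main obstacle is establishing that \emph{both} inequalities are strict. My plan is to argue by contradiction exactly as in the equality analysis of Proposition~\ref{propo:equalitythm}: if equality held on either end, then by Theorem~\ref{the:interlacinghaemers}(ii) the interlacing would be tight, forcing $SB=QS$, i.e.\ the column space of $S$ would be spanned by eigenvectors of $Q$ and the associated partition of $G$ would be almost equitable. By Lemma~\ref{lemma} this would require each vertex of $U$ to be joined to all or to none of $\overline{U}$ and the induced graph $H$ to be regular, together with the rigid spectral condition $q_{m+1}=q'_1+b'$ identified in Proposition~\ref{propo:equalitythm}. Since $G$ is connected and $0<m<n$, the cut $\partial(U,\overline{U})$ is nonempty, and I would show that the mixed column(s) of $S$—constant on $\overline{U}$ and matching the singleton pattern on $U$—cannot satisfy the eigenvalue equation of $Q$ across a boundary edge, so the equality configuration is impossible. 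Verifying that this rigidity fails simultaneously at the top ($\mu_i=q_i$) and at the bottom ($\mu_i=q_{n-m+i}$) ends—rather than only at one end—is where the real care is needed, and this is the step I expect to be delicate; the preceding interlacing and trace computations are comparatively mechanical.
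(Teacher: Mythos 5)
There is a genuine gap: the object your whole argument rests on --- an $n\times m$ matrix $S$ with $S^{\top}S=I_m$ and $\operatorname{tr}(S^{\top}QS)=\sum_{u\in U}d_u+\tfrac{4E[\overline{U}]}{n-m}$ --- is never constructed, and the natural candidates do not work. If the columns are exactly $e_{u_1},\dots,e_{u_m}$ you get $\operatorname{tr}(Q_U)=\sum_{u\in U}d_u$ with no correction term; if you mix a direction supported on $\overline{U}$ (say $\mathbf{1}_{\overline{U}}/\sqrt{n-m}$) into one or more columns, you either destroy orthonormality (all such columns share the $\overline{U}$-component) or you pick up cross terms $e_{u}^{\top}Q\,\mathbf{1}_{\overline{U}}$ coming from the boundary edges, and moreover $\mathbf{1}_{\overline{U}}^{\top}Q\,\mathbf{1}_{\overline{U}}/(n-m)=\bigl(|\partial(U,\overline{U})|+4E[\overline{U}]\bigr)/(n-m)$, not $4E[\overline{U}]/(n-m)$. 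Your proposed repair --- passing to $\widehat{Q}=Q-P$ so that the $\overline{U}\times\overline{U}$ block becomes the signless Laplacian of $G[\overline{U}]$ --- changes the matrix whose eigenvalues you are compressing, so the interlacing then bounds the eigenvalues of $\widehat{Q}$, not the $q_i(G)$ appearing in \eqref{eq3a}; since $P\succeq 0$ this perturbation moves the two ends of the spectrum in the same direction and cannot give both inequalities. Note also that merely asserting the existence of an $m$-dimensional compression with the prescribed trace is essentially circular: the set of achievable traces of such compressions is exactly the interval $\bigl[\sum_{i=1}^{m}q_{n-i+1},\,\sum_{i=1}^{m}q_i\bigr]$, so existence of your $S$ is equivalent to the (non-strict) statement being proved.

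The paper avoids all of this by \emph{not} shrinking to $m$ dimensions. It reuses the $(m+1)\times(m+1)$ quotient matrix $B'$ of Theorem~\ref{t1}, whose trace is $\sum_{u\in U}d_u+\bigl(\sum_{\overline{u}}d_{\overline{u}}+2E[\overline{U}]\bigr)/(n-m)$, and proves the key two-sided estimate $\mu'_{m+1}<|\partial(U,\overline{U})|/(n-m)<\mu'_1$ by writing $B'=P^2DD^{\top}$ for an edge-incidence matrix $D$, symmetrizing to $A=PDD^{\top}P$, and evaluating the Rayleigh quotient at $x=(0,\dots,0,1)$; connectivity forces a nonzero off-diagonal entry in the last column of $A$, which is what makes both inequalities strict. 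Subtracting $|\partial(U,\overline{U})|/(n-m)$ from $\operatorname{tr}B'$ (using $\sum_{\overline{u}}d_{\overline{u}}=|\partial(U,\overline{U})|+2E[\overline{U}]$) then sandwiches the middle term of \eqref{eq3a} strictly between $\sum_{i=2}^{m+1}\mu'_i$ and $\sum_{i=1}^{m}\mu'_i$, and ordinary (non-tight) interlacing finishes the proof. If you want to salvage your write-up, the step to import is precisely this ``drop one eigenvalue from the correct end of the $(m+1)$-dimensional compression'' device; your tight-interlacing rigidity analysis is then unnecessary, since strictness is already delivered by the connectivity argument inside the Claim.
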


\begin{proof}
	Take matrix $B'$ and its eigenvalues similarly as Theorem \ref{t1}. By Theorem \ref{t1} we have,
	
	\begin{equation}
	\sum_{i=1}^{m+1} \mu'_i = tr B' = \sum_{u \in U} d_u + \frac{\sum_{\overline{u} \in \overline{U}} d_{\overline{u}} +2E[\overline{U}]}{n-m}.
\end{equation} 

\begin{claim}\label{claim}
$\mu'_{m+1}<\frac{|\partial(U,\bar{U})|}{n-m}<\mu'_1$.\end{claim}
\begin{proof}[Proof of Claim \ref{claim}.]
Let $\bar{E}=\{e_1, \ldots, e_k\}$ be the set of all edges of $G$ that have at least one endpoint in $U$. Define a $(m+1) \times k$ matrix $D$ as follows:
$$
D_{ij} = \left\{
\begin{array}{ll}
	1, & \text{ if } e_j \in \bar{E} \text{ is incident to at least one vertex in } U_i \\
	0, & \text{otherwise}.
\end{array}
\right.
$$
Also, consider the $(m+1)  \times (m+1)$ diagonal matrix $P$ with diagonal entries equal to $1$ except the last one equal to $\frac{1}{ \sqrt[]{n-m}}$:
\[
P =
\begin{bmatrix}
	1 & & &\\
	& \ddots && \\
	& & 1&\\
	& & & \frac{1}{ \sqrt[]{n-m}}
\end{bmatrix}.
\]
Now let $\tilde{B}=DD^{T}$. It is easy to see that $B'=P^{2}\tilde{B}$, and it follows that

\begin{equation*}
	P^{-1}B'P=P\tilde{B}P=A,
\end{equation*} 
which means $B'$ and $A$ are similar, so $\mu'_1$ and $\mu'_{m+1}$ are the largest and the smallest eigenvalues of $A$, respectively. Then by the Rayleigh principle we find that:
\begin{equation*}
	\mu'_{m+1}\leq\frac{x^{T}Ax}{x^{T}x}=\frac{x^{T}P^{T}DD^{T}Px}{x^{T}x}=\frac{\vert\vert D^{T}Px\vert\vert^2}{\vert\vert x\vert\vert^2}\leq\mu'_1
\end{equation*} 
Then, taking $x=(0,\ldots,0,1)$ the claim follows.
Moreover, we show that both inequalities are strict. In Rayleigh principle a necessary condition for equality to be hold in both inequalities is that $x$ must be an eigenvector of $A$. Assume that $x$ is an eigenvector of $A$. It means that all entries of the last column of $A$ except the last entry must be $0$. Similarly, all entries of the last column of $\tilde{B}$ except the last entry must be $0.$ Since $P$ is a matrix such that by being multiplied from left(right) to $\tilde{B}$ it will only multiply the last row(column) of $\tilde{B}$ by $1/\sqrt{n-m}$. On the other hand, $G$ is connected, thus there exists an edge $e_1 \in \bar{E}$, which has endpoints in $U_{m+1}$ and $U_1$. Thus the associated entry is always greater than $0$ which is a contradiction. Thus both upper and lower bounds are strict.
\end{proof}

Now, by multiplying $-1$ to each side of the claim inequality, and adding up $\sum_{i=1}^{m+1} \mu'_i$, we obtain
\begin{align*}
\sum_{i=2}^{m+1} \mu'_i &<\sum_{i=1}^{m+1} \mu'_i+\frac{-\partial(U,\bar{U})}{n-m}\\
&=\sum_{u \in U} d_u + \frac{\sum_{\bar{u} \in \bar{U}} d_i +2E[\bar{U}]}{n-m}+\frac{-\partial(U,\bar{U})}{n-m}\\
&=\sum_{u\in U} d_u + \frac{4E[\bar{U}]}{n-m}<\sum_{i=1}^{m} \mu'_i,
\end{align*}
and applying interlacing, we get $q_i \geq \mu'_i \geq q_{n-m-1+i}$, for $i=1,\ldots,m+1$. Then \eqref{eq3a} follows from observing $\sum_{i=1}^{m} \mu'_i\leq\sum_{i=1}^{m} q_i$ and $\sum_{i=1}^{m} q_{n-i+1}\leq\sum_{i=2}^{m+1} \mu'_i$. 
\end{proof}

As a consequence of Theorem \ref{t1} we have the following two corollaries.

\begin{corollary}\label{propo:applications1}
Let $G$ be a connected $k$-regular graph on $n = |V |$ vertices, having adjacency matrix $A$ with eigenvalues $k = \gamma_1 \geq \gamma_2 \geq \cdots \geq \gamma_n$. For any given partition $(U,\bar{U})$ such that $|U|=m$, it holds
\begin{align*}
	 \frac{2E[\bar{U}]}{n-m} \leq \sum_{i=1}^{m+1} \gamma_i. \label{eq3}
\end{align*}
\end{corollary}
\begin{proof}
Since $d_v=k$ for any vertex $v \in V(G)$, Theorem \ref{t1} implies that
\begin{equation*}\label{eq3}
	(m+1)k + \frac{2E[\bar{U}]}{n-m} \leq \sum_{i=1}^{m+1} q_i. 
\end{equation*}
Note that $q_i=\gamma_i + k$ for $ 1 \leq i \leq n$ and thus the result follows.
\end{proof}
 
 Note that for regular complete multipartite graphs, the bound in Corollary \ref{propo:applications1} holds with equality if and only if $m = n-1$.


\begin{corollary}\label{propo:applications2}
Let $G$ be a connected graph on $n$ vertices. If $I$ is an independent set of cardinality $\alpha > 1$, then
\begin{align*}
    \frac{\alpha}{\alpha-1} \sum_{i=n-\alpha+2}^{n} q_i(G)  \leq \sum_{\overline{u} \in I} d_{\overline{u}}.
\end{align*}
\end{corollary}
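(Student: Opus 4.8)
The plan is to apply the right-hand (upper) bound in Theorem \ref{t1} to the partition whose single ``large'' block is precisely the independent set $I$. Concretely, I would take $U = V \setminus I$, so that $\overline{U} = I$, $m = |U| = n-\alpha$, and $n-m = \alpha$. Since $G$ is connected on $n \geq 2$ vertices we have $\alpha \leq n-1$, hence $1 \leq m < n$ and Theorem \ref{t1} applies. The decisive simplification comes from $I$ being independent: the subgraph induced on $\overline{U} = I$ has no edges, so $E[\overline{U}] = 0$, and the corresponding diagonal entry of the quotient matrix collapses to $b'_{m+1,m+1} = \frac{1}{\alpha}\sum_{u \in I} d_u$, the average degree over $I$.

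With this choice, the right-hand inequality of \eqref{eq3} reads
\begin{equation*}
	\sum_{u \in V\setminus I} d_u + \frac{1}{\alpha}\sum_{u\in I} d_u \;\leq\; \sum_{i=1}^{m+1} q_i(G) = \sum_{i=1}^{n-\alpha+1} q_i(G).
\end{equation*}
The next step is pure bookkeeping via the trace identity $\sum_{i=1}^{n} q_i(G) = \operatorname{tr} Q = \sum_{v \in V} d_v$, which is the $m=n$ equality case of \eqref{eq:1HaemersSeminar2}. I would use it on both sides: on the left to write $\sum_{u \in V\setminus I} d_u = \sum_{v \in V} d_v - \sum_{u \in I} d_u$, and on the right to write $\sum_{i=1}^{n-\alpha+1} q_i = \sum_{i=1}^{n} q_i - \sum_{i=n-\alpha+2}^{n} q_i$. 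Because $\sum_{v \in V} d_v = \sum_{i=1}^{n} q_i$, the common term cancels on both sides.

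After cancellation the inequality reduces to $-\frac{\alpha-1}{\alpha}\sum_{u \in I} d_u \leq -\sum_{i=n-\alpha+2}^{n} q_i(G)$; multiplying through by $-1$ and then by $\frac{\alpha}{\alpha-1}$ (legitimate since $\alpha > 1$) yields exactly the claimed bound. I do not expect a genuine obstacle here beyond correctly identifying $U = V\setminus I$ as the subset to feed into Theorem \ref{t1} and keeping the index ranges straight; the entire content is carried by the upper bound of Theorem \ref{t1}, the vanishing of $E[\overline{U}]$ for an independent set, and the trace normalisation.
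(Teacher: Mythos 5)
Your proposal is correct and follows essentially the same route as the paper: the paper also sets $U = \overline{I}$, invokes the right-hand inequality of Theorem \ref{t1} with $E[\overline{U}] = 0$, and converts via $\sum_{u\in \overline{I}} d_u = 2E(G) - \sum_{\overline{u}\in I} d_{\overline{u}}$ and $\sum_{i=1}^{n-\alpha+1} q_i(G) = 2E(G) - \sum_{i=n-\alpha+2}^{n} q_i(G)$, which is exactly your trace cancellation written with $2E(G)$ in place of $\sum_i q_i(G)$. No gaps.
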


\begin{proof}
Let $U = \overline{I}$. From Theorem \ref{t1} and noting that $I$ has no edges we have
\begin{equation*}\label{eq3}
     \sum_{u \in \overline{I}} d_u + \frac{\sum_{\overline{u} \in I} d_{\overline{u}}}{\alpha} \leq \sum_{i=1}^{n-\alpha+1} q_i(G). 
\end{equation*}
Now, note that 
$$\sum_{u\in \overline{I}} d_u = 2E(G) - \sum_{\overline{u} \in I} d_{\overline{u}} $$
and 
\begin{align*}
\sum_{i=1}^{n-\alpha+1} q_i(G) = 2E(G) - \sum_{u=n-\alpha+2}^n q_i(G).
\end{align*}
So the result follows.
\end{proof}

 The next result is a $Q$-analog version and an extension of a bound by Grone and Merris for the sum of the largest Laplacian eigenvalues \cite{GM1994}.
 
\begin{theorem}\label{thm:latestimprovement}
 Let $G$ be a connected graph of order $n \geq 3$. Given $a$ vertex subset $U \subset V $, with $m = |U| < n$ such that $G[U]=(U, E[U])$ be its induced subgraph. Then
 
 \begin{equation*}
 	\sum_{i=1}^{m+1} q_i \geq \sum_{u \in U} d_u + m - |E[U]|. \label{eq5}
 \end{equation*}
 
\end{theorem}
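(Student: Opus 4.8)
The plan is to run the two interlacing techniques of this section on the incidence matrix rather than on $Q$ itself. Recall the standard factorization $Q=RR^{\top}$, where $R$ is the $n\times|E|$ (unsigned) vertex--edge incidence matrix, and put $M=R^{\top}R$; thus $M_{ee}=2$ and, for $e\neq f$, $M_{ef}=1$ if $e,f$ share a vertex and $0$ otherwise. Since $Q=RR^{\top}$ and $M=R^{\top}R$ share the same nonzero eigenvalues and are both positive semidefinite, their $m$ largest eigenvalues coincide, so $\sum_{i=1}^{m}q_i(G)=\sum_{i=1}^{m}\mu_i(M)$, where $\mu_1\ge\mu_2\ge\cdots$ are the eigenvalues of $M$ (note $m\le n-1\le|E|$ for connected $G$). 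Because $q_{m+1}(G)\ge 0$, it suffices to prove the sharper statement $\sum_{i=1}^{m}q_i(G)\ge\sum_{u\in U}d_u+m-|E[U]|$; the claimed bound with $m+1$ terms then follows at once, and this route also avoids any worry about whether $|E|\ge m+1$.

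Next I would localize to the edges meeting $U$. Let $E_1=E[U]\cup\partial(U,\overline{U})$ be the set of edges incident to $U$, let $R_1$ be the corresponding columns of $R$, and set $M_1=R_1^{\top}R_1$, a principal submatrix of $M$ indexed by $E_1$. Cauchy interlacing (the $B=S^{\top}AS$ case of Theorem \ref{the:interlacinghaemers} with $S$ selecting the $E_1$-coordinates) gives $\mu_i(M)\ge\mu_i(M_1)$ for every $i$, hence $\sum_{i=1}^{m}\mu_i(M)\ge\sum_{i=1}^{m}\mu_i(M_1)$. Using $\sum_{u\in U}d_u=2|E[U]|+|\partial(U,\overline{U})|$, the target right-hand side equals $|E[U]|+|\partial(U,\overline{U})|+m=|E_1|+m$, so the problem reduces to showing $\sum_{i=1}^{m}\mu_i(M_1)\ge|E_1|+m$.

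For this I would apply the quotient-matrix interlacing to $M_1$. Partition $E_1$ into $m$ classes $P_1,\dots,P_m$, one per vertex of $U$, by assigning each cross edge to its unique endpoint in $U$ and each edge of $E[U]$ to exactly one of its two endpoints. The key point is that any two edges in the same class $P_j$ both contain $u_j$, so they share a vertex and the diagonal block of $M_1$ on $P_j$ is $2I+(J-I)=I+J$, which has constant row sum $|P_j|+1$. Consequently the $m\times m$ quotient matrix $C$ has diagonal entries $C_{jj}=|P_j|+1$, so $\operatorname{tr} C=\sum_{j=1}^m(|P_j|+1)=|E_1|+m$. Since the eigenvalues of $C$ interlace those of $M_1$ (Theorem \ref{the:interlacinghaemers} with $S$ the normalized characteristic matrix of the partition, for which $B=S^{\top}M_1S$ has the same eigenvalues and trace as $C$), we obtain $\sum_{i=1}^{m}\mu_i(M_1)\ge\operatorname{tr} C=|E_1|+m$, completing the chain.

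The one genuinely delicate step, and the main obstacle, is that all $m$ classes must be nonempty for the quotient matrix and the interlacing to be valid. A vertex $u_j$ with a neighbour in $\overline{U}$ automatically receives a cross edge, but a vertex all of whose neighbours lie in $U$ must be handed one of its internal edges; this amounts to orienting $E[U]$ so that every such purely-internal vertex has out-degree at least $1$. I would isolate this as a small lemma: it is possible precisely because $G$ is connected and $\overline{U}\neq\emptyset$, since every connected component of $G[U]$ must contain a vertex having a neighbour in $\overline{U}$ (otherwise that component would be a connected component of all of $G$), and orienting a spanning tree of each component toward such a vertex leaves every other vertex, in particular every internal vertex, with an outgoing arc. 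Once the classes are nonempty one also gets $|E_1|=\sum_j|P_j|\ge m$ for free, which is exactly the bound needed to justify the eigenvalue bookkeeping identifying the top eigenvalues of $Q$, $M$, and $M_1$ in the first two paragraphs.
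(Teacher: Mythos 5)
Your proof is correct and follows essentially the same route as the paper's: pass to $M=R^{\top}R$, restrict to the principal submatrix indexed by the edges meeting $U$, and apply quotient-matrix interlacing to the partition of those edges into one class per vertex of $U$ (the paper realizes your edge-assignment as an orientation in which every vertex of $U$ gets an outgoing arc, giving the same diagonal entries $d_u^{+}+1$ and the same trace $\sum_{u\in U}d_u+m-|E[U]|$). The only difference is presentational: you spell out why every class can be made nonempty via a spanning-tree argument, a point the paper asserts only parenthetically.
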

\begin{proof}
Consider an orientation of $G$ with all edges in $E(U,\overline{U})$ oriented from $U$ to $\overline{U}$, and every vertex in $U\setminus\partial\overline{U}$ having some outgoing arc (this is always possible as $G$ is connected). Let $Q$ be the corresponding oriented incidence matrix of G, and let $(D)_{ij} = |(Q)_{ij}|$. Write
$D = [D_1 \, D_2]$, where $D_1$ corresponds to $E[U] \cup E(U,\overline{U})$, and $D_2$ corresponds to $E[\overline{U}]$.
Consider the matrix $M' = D^{\top}D$ , with entries $(M)_{ii} = 2$, $(M)_{ij} = 1$ if the arcs $e_i, e_j$ are
incident to the same vertex,
and $(M)_{ij} = 0$ if the corresponding edges are disjoint, and define $M'_1 = D_1^{\top}D_1$. Then
$M'$ has the same nonzero eigenvalues as $Q = DD^{\top}$, the signless Laplacian matrix of $G$, and $M'_1$
is a principal submatrix of $M'$. For every vertex $u \in U$, let $E_u$ be the set of outgoing
arcs from $u$. Then $\{E_u | u \in U\} $ is a partition of $E[U] \cup E(U,\overline{U})$. Consider the quotient
matrix $B_1 = (b_{ij})$ of $M'_1$ with respect to this partition. Then, $b_{uu} = d^{+}_{u} + 1$ for each
$u \in U$. Let $\mu_1 \geq \mu_2 \geq \cdots \geq \mu_m$ be the eigenvalues of $B_1$, then
\begin{equation*}
	tr B_1 = \sum_{i=1}^{m} \mu_i = \sum_{u \in U} d_u^{+} + m = \sum_{u \in U} d_u - |E[U]| + m
\end{equation*}
and the result follows since the eigenvalues of $B_1$ interlace those of $M'_1$, which in turn interlace those of $M'$.
\end{proof}

Note that Theorem \ref{thm:latestimprovement} can be improved by considering the partition $\mathcal{P} = \{E_u|u\in U\} \cup \{E[\overline{U}]\}$ of the whole edge set of $G$:

\begin{theorem}
 Let $G$ be a connected graph of order $n \geq 3$. Given a vertex subset $U \subset V $, with $m = |U| < n$, let $G[U]=(U, E[U])$ and
$G[\overline{U}]$ be the corresponding induced subgraphs. Let $q'_1$ be the largest signless Laplacian eigenvalue
of $G[\overline{U}]$ Then
\begin{align}
\sum_{i=1}^{m+1} q_i \geq \sum_{u \in U} d_u + m - |E[U]| + q'_1.
\end{align}
\end{theorem}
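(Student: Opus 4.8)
The plan is to reprise the edge-orientation/line-graph construction from the proof of Theorem~\ref{thm:latestimprovement}, but to feed a carefully built rectangular matrix $S$ into the general interlacing result (Theorem~\ref{the:interlacinghaemers}) rather than into a pure quotient-matrix argument. As in that proof, I would orient $G$ so that every edge of $E(U,\overline{U})$ points from $U$ to $\overline{U}$ and every vertex of $U$ acquires at least one outgoing arc (possible by connectedness), let $D=[D_1\ D_2]$ be the resulting unsigned incidence matrix with $D_1$ indexed by $E[U]\cup E(U,\overline{U})$ and $D_2$ by $E[\overline{U}]$, and set $M'=D^{\top}D$. Recall that $M'$ has diagonal entries $2$ and off-diagonal entry $1$ exactly when two edges meet, that $M'$ and $Q=DD^{\top}$ share the same nonzero spectrum (so $\sum_{i=1}^{m+1}\lambda_i(M')=\sum_{i=1}^{m+1}q_i$), and that the principal block $M'_{\overline{U}}:=D_2^{\top}D_2$ shares its nonzero spectrum with $Q(G[\overline{U}])=D_2D_2^{\top}$, whence its largest eigenvalue is exactly $q'_1$.

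The heart of the argument---the ``mix'' of the two interlacing types announced in the introduction---is the choice of $S$. I would take $S$ to be the $|E|\times(m+1)$ matrix whose first $m$ columns are the \emph{normalized characteristic vectors} $\tfrac{1}{\sqrt{d_u^{+}}}\,\mathbf{1}_{E_u}$ of the outgoing-arc sets $E_u$ ($u\in U$)---the quotient-matrix ingredient---and whose last column is the \emph{unit Perron eigenvector} of the block $M'_{\overline{U}}$, extended by zeros off $E[\overline{U}]$---the Cauchy/eigenvector ingredient. Because the sets $\{E_u\}_{u\in U}$ partition $E[U]\cup E(U,\overline{U})$ and are disjoint from $E[\overline{U}]$, these $m+1$ columns have pairwise disjoint supports and unit norm, so $S^{\top}S=I$ and Theorem~\ref{the:interlacinghaemers}$(i)$ applies.

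It then remains to read off $\operatorname{tr}(S^{\top}M'S)$, which only involves diagonal entries and hence only the diagonal blocks $M'_{E_u,E_u}=I+J$ and $M'_{\overline{U}}$. The first $m$ diagonal entries evaluate to $d_u^{+}+1$ exactly as in Theorem~\ref{thm:latestimprovement}, contributing $\sum_{u\in U}d_u^{+}+m=\sum_{u\in U}d_u-|E[U]|+m$; the last diagonal entry is the Rayleigh quotient of the Perron eigenvector, namely $q'_1$. Summing the interlacing inequalities $\lambda_i(M')\ge\mu_i(S^{\top}M'S)$ for $i=1,\dots,m+1$ yields $\sum_{i=1}^{m+1}q_i=\sum_{i=1}^{m+1}\lambda_i(M')\ge\operatorname{tr}(S^{\top}M'S)=\sum_{u\in U}d_u-|E[U]|+m+q'_1$, which is the claim.

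I expect the main obstacle to be bookkeeping rather than conceptual: one must (i) verify that the orientation really makes every $E_u$ nonempty so the normalizations are legitimate, (ii) confirm the disjoint-support claim that makes $S^{\top}S=I$, and (iii) justify the spectral identifications $\lambda_{\max}(M'_{\overline{U}})=q'_1$ and $\sum_{i=1}^{m+1}\lambda_i(M')=\sum_{i=1}^{m+1}q_i$, paying attention to the zero-eigenvalue padding that accompanies the shared nonzero spectrum of the two positive semidefinite matrices. The only genuinely special case is $E[\overline{U}]=\emptyset$, where $q'_1=0$ and the statement degenerates to Theorem~\ref{thm:latestimprovement}; I would dispose of it first and then assume $E[\overline{U}]\neq\emptyset$, which also guarantees $|E|\ge m+1$ so that the Perron column exists and the interlacing has the right size.
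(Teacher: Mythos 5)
Your proposal is correct and follows essentially the same route as the paper's own proof: the same oriented incidence matrix $D=[D_1\ D_2]$ and line-graph-type matrix $M'=D^{\top}D$, the same $(m+1)$-column matrix $S$ whose first $m$ columns are the normalized characteristic vectors of the sets $E_u$ and whose last column is a unit eigenvector of $D_2^{\top}D_2$ for $q'_1$ padded with zeros, and the same trace-plus-interlacing conclusion. The extra bookkeeping you flag (nonemptiness of each $E_u$, disjoint supports giving $S^{\top}S=I$, the shared nonzero spectra of $DD^{\top}$ and $D^{\top}D$, and the degenerate case $E[\overline{U}]=\emptyset$) is all sound and only makes explicit what the paper leaves implicit.
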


\begin{proof}
First observe that the signless Laplacian matrix of $G[\overline{U}]$ is $D_2D_2^{\top}$, and therefore $q'_1$ is also
the largest eigenvalue of $D_2^{\top}D_2$. Next we apply interlacing to an $(m+1)\times (m+1)$ quotient matrix $B = S^{\top}MS$, which is defined slightly different than before. The first $m$ columns of $S$ are the normalized characteristic vectors of $E_u$ (as before), but the last column of $S$
equals $\left[\begin{array}{cc}
	0 & v
\end{array}\right]^\top$, where $v$ is a normalized eigenvector of $D_2^{\top}D_2$ for the eigenvalue $q'_1$. Then
$b_{m+1,m+1} = (D_2v)^{\top}D_2v = q'_1$, and we find $tr B = \sum_{u \in U} d_u + m - |E[U]| + q'_1$. 
\end{proof}

\subsection*{Acknowledgements} 
The research of Aida Abiad is partially supported by the FWO grant 1285921N. Leonardo de Lima has been supported by CNPq Grant 315739/2021-5, and Carla Oliveira has also been supported by CNPq Grant 30458/2020-0. The authors thank Ali Mohammadian for his valuable comments on Section \ref{sec:twoevs}.

A preliminary version of this paper appeared in the Proceedings of the 14th Cologne-Twente Workshop on Graphs and Combinatorial Optimization (CTW16) in \emph{Electronic Notes in Discrete Mathematics} 55 (2016), 173--176.


\end{document}